\numberwithin{subsection}{section}
\numberwithin{equation}{section}
\theoremstyle{plain}
\newtheorem{thm}[equation]{Theorem}
\newtheorem{prop}[equation]{Proposition}
\newtheorem{cor}[equation]{Corollary}
\theoremstyle{definition}
\newtheorem{defn}[equation]{Definition}
\theoremstyle{remark}
\newtheorem{rem-exer}[equation]{Remark/Exercise}
\newtheorem{rem-exers}[equation]{Remark/Exercises}
\DeclareSymbolFont{cyrletters}{OT2}{wncyr}{m}{n}
\DeclareMathSymbol{\sha}{\mathalpha}{cyrletters}{"58}
\newcommand{\CC}{\mathcal{C}}
\newcommand{\DD}{\mathcal{D}}
\renewcommand{\O}{\mathcal{O}}
\newcommand{\EE}{\mathcal{E}}
\newcommand{\NN}{\mathcal{N}}
\newcommand{\F}{\mathbb{F}}
\newcommand{\Fp}{{\mathbb{F}_p}}
\newcommand{\Fq}{{\mathbb{F}_q}}
\newcommand{\Fpbar}{{\overline{\mathbb{F}}_p}}
\newcommand{\Qbar}{{\overline{\mathbb{Q}}}}
\newcommand{\ratto}{{\dashrightarrow}}
\newcommand{\Z}{\mathbb{Z}}
\newcommand{\Q}{\mathbb{Q}}
\renewcommand{\P}{\mathbb{P}}
\newcommand{\p}{\mathfrak{p}}
\newcommand{\into}{\hookrightarrow}
\newcommand{\tensor}{\otimes}
\newcommand{\compose}{\circ}
\newcommand{\labeledto}[1]{\overset{#1}{\to}}
\DeclareMathOperator{\Ker}{Ker}
\DeclareMathOperator{\coker}{Coker}
\DeclareMathOperator{\ord}{ord}
\DeclareMathOperator{\rk}{Rank}
\DeclareMathOperator{\gal}{Gal}
\DeclareMathOperator{\spec}{Spec}
\DeclareMathOperator{\disc}{Disc}
\DeclareMathOperator{\Sel}{Sel}
\DeclareMathOperator{\Fr}{Fr}
\def\clap#1{\hbox to 0pt{\hss#1\hss}}
\renewcommand{\NN}{{Nm_q}}
\begin{document}
\title{Explicit points on the Legendre curve II}

\author{Ricardo P. Concei\c c\~ao}
\address{Oxford College of Emory University  \\ Oxford, GA 30054}
\email{rconcei@emory.edu}
\author{Chris Hall}
\address{Department of Mathematics \\ University of Wyoming 
  \\ Laramie, WY 82071}
\email{chall14@uwyo.edu}
\author{Douglas Ulmer}
\address{School of Mathematics \\ Georgia Institute of Technology
  \\ Atlanta, GA 30332}
\email{ulmer@math.gatech.edu}


\subjclass[2010]{Primary 14G05, 11G40; 
Secondary 11G05, 14G10, 14G25, 14K15}

\begin{abstract}
  Let $E$ be the elliptic curve $y^2=x(x+1)(x+t)$ over the field
  $\Fp(t)$ where $p$ is an odd prime.  We study the arithmetic of $E$
  over extensions $\Fq(t^{1/d})$ where $q$ is a power of $p$ and $d$
  is an integer prime to $p$.  The rank of $E$ is given in terms of an
  elementary property of the subgroup of $(\Z/d\Z)^\times$ generated
  by $p$.  We show that for many values of $d$ the rank is large.  For
  example, if $d$ divides $2(p^f-1)$ and $2(p^f-1)/d$ is odd, then the
  rank is at least $d/2$.  When $d=2(p^f-1)$, we exhibit explicit
  points generating a subgroup of $E(\Fq(t^{1/d}))$ of finite index in
  the ``2-new'' part, and we bound the index as well as the order of
  the ``2-new'' part of the Tate-Shafarevich group.
\end{abstract}

\maketitle

\section{Introduction}
Fix an odd prime $p$ and consider the elliptic curve
\begin{equation}\label{eq:E}
E:\qquad y^2=x(x+1)(x+t)
\end{equation}
defined over the rational function field $\Fp(t)$.  We call $E$ the
Legendre curve.  In \cite{Ulmer14a}, the third author considered
the arithmetic of $E$ over extensions of the form $\Fq(t^{1/d})$ where
$q$ is a power of $p$ and $d$ is a positive integer prime to $p$,
proving in particular that the Birch and Swinnerton-Dyer conjecture
holds for $E$ over each of these fields.

Most of \cite{Ulmer14a} emphasizes the case when $d$ divides
$p^f+1$ for some $f$.  (These integers $d$ can also be described as
those such that $-1$ is in the cyclic subgroup of $(\Z/d\Z)^\times$
generated by $p$.)  The emphasis on this case is natural because the
integers $p^f+1$ are known to play a special role in arithmetic
geometry in characteristic $p$.  For example, the Jacobian of the
Fermat curve of degree $d$ is supersingular if and only if $d$ divides
$p^f+1$ for some $f\ge1$ \cite{ShiodaKatsura79}.

One of our aims in this note is to point out that that there are many
other values of $d$ such that the curve $E$ has interesting arithmetic
(e.g., large Mordell-Weil rank) over $\Fq(t^{1/d})$.  To that end, we
calculate the Hasse-Weil $L$-function of $E$ over the extensions
$\Fq(t^{1/d})$ for all $q$ and $d$, and we give a simple and explicit
formula for the rank of $E(\Fq(t^{1/d}))$.  Our formula for the rank
is in terms of a new, elementary notion of ``balanced subgroup of the
multiplicative group.''

Using the notion of balanced subgroup, we prove that if $4$ divides
$d$, $p^f\equiv d/2+1\pmod d$ for some $f$, and $q\equiv1\pmod d$,
then $\ord_{s=1}L(E/\Fq(t^{1/d}),s)\ge d/2$ and $E(\Fq(t^{1/d}))$ has
rank at least $d/2$.  These integers $d$ can also be described as
those dividing $2(p^f-1)$ for some $f$ with $2(p^f-1)/d$ odd.  Note
that such integers $d>4$ do not divide $p^{g}+1$ for any value of $g$,
so this is indeed a class of integers distinct from those studied in
\cite{Ulmer14a}.  In \cite{PomeranceUlmer13}, it is shown that
this class of integers is, in a precise sense, more numerous than the
class of divisors of elements of $\{p^g+1|g\ge1\}$.

The statement of our rank formula is in Section~\ref{s:rank}.  In
Section~\ref{s:L-function}, we calculate the $L$-function of $E$ over
$\Fq(t^{1/d})$ for all $d$ and all $q$ in terms of Jacobi sums, and in
Section~\ref{s:Jacobi}, we make these Jacobi sums explicit for those
values of $d$ where the $L$-function has zeroes.  This leads to the
proof of the rank formula stated in Section~\ref{s:rank}.

In Section~\ref{s:balanced}, we make several comments on the condition
that $p$ is balanced modulo $d$, and in Subsection~\ref{ss:d/2+1}, we
prove the rank assertion made above for values $d$ where some power of
$p$ is congruent to $d/2+1$ modulo $d$.

In Sections~\ref{s:points} through \ref{s:sha}, we study the case
$d=2(p^f-1)$ in more detail, exhibiting explicit points which generate
a finite index subgroup of the ``2-new'' part of the Mordell-Weil
group of $E$.  We also obtain bounds on the ``2-new'' part of the
Tate-Shafarevich group of $E$, showing in particular that it is a
$p$-group.

In Section~\ref{s:Berger}, we show how certain endomorphisms of curves
``explain'' the explicit points of \cite{Ulmer14a} and this
paper, in the style of \cite{Ulmer13a}.  Finally, in
Section~\ref{s:p=2}, we explain how many of the results of this paper
can be extended to a context where $p=2$ is relevant.

We thank Ernie Croot and Don Zagier for helpful comments made in the
early stages of this work.  We also thank Carl Pomerance for his
encouragement and useful comments.

\subsection{Notation} For the convenience of the reader, we gather here
notation which is used in several parts of the paper.
\begin{itemize}
\item For a finite set $S$, we write $|S|$ for the cardinality of $S$.
\item We assume that $p$ is an odd prime number and $q$ is
  a power of $p$.  We write $\Fp$ and $\Fq$ for the fields of
  cardinality $p$ and $q$ respectively.  See Section~\ref{s:p=2} for a
  situation where $p=2$ is relevant.
\item Throughout, $d$ will be an integer $>2$ and relatively prime to
  $p$.  See \cite[Rmk.~12.2]{Ulmer14a} for the case when $p$
  divides $d$.
\item Let $K$ be the function field $\Fq(t^{1/d})=\Fq(u)$ where
  $u^d=t$.
\item Throughout, $E$ will denote the elliptic curve over $\Fp(t)$
  defined by Equation~\eqref{eq:E}.  We write $L(E/K,T)$ and
  $L(E/K,s)$ for the Hasse-Weil $L$-function of $E$ over the extension
  $K$ of $\Fp(t)$.  Here $T$ and $s$ are related by $T=q^{-s}$.  We
  write $\sha(E/K)$ for the Tate-Shafarevich group of $E$ over $K$.
\item Let $G=(\Z/d\Z)^\times$ and let $A$ (resp.~$B$) be the subset of
  $G$ consisting of classes whose least positive residue lies in
  $(0,d/2)$ (resp.~$(d/2,d)$).
\item For an integer $a$ prime to $d$, let $\langle a\rangle_d$ be the
  cyclic subgroup of $G$ generated by $a$.  We will only use this
  notation when $a=p$ or $a=q$.  We write $\phi(d)$ for $|G|$ and
  $o_d(q)$ for $|\langle q\rangle_d|$, i.e., for the order of $q$
  modulo $d$.  We will also write $\phi(e)=|(\Z/e\Z)^\times|$ and
  $o_e(q)=|\langle q\rangle_e|$ for divisors $e$ of $d$.
\item We write $o\subset\Z/d\Z$ for an orbit of the action of $\langle
  p\rangle_d$ on $\Z/d\Z$.  We always assume that $o\neq\{0\}$ and
  that $o\neq\{d/2\}$ when $d$ is even.
\item For an orbit $o\subset\Z/d\Z$ as above, we define a certain
  Jacobi sum $J_o$ in Subsection~\ref{ss:Jacobi}
\item We write $\mu_e$ and $\zeta_e$ for the group of roots of unity
  of order $e$ and a fixed primitive root of unity of order $e$
  respectively.  Depending on the context, these are to be taken in
  $\Qbar$, the algebraic closure of $\Q$, or $\Fpbar$, the algebraic
  closure of $\Fp$.  In particular, $\Q(\mu_e)$ denotes the splitting
  field of $x^e-1$ over $\Q$.
\item In Sections~\ref{s:points} through \ref{s:sha}, we take
  $d=2(p^f-1)$ and $q\equiv1\pmod d$ and write down explicit points
  $R_i\in E(\Fq(t^{1/d}))$. The subgroup they generate is denoted
  $W_d$. 
\end{itemize}

\section{The rank of $E$ over $\Fq(t^{1/d})$}\label{s:rank}
We are going to compute the rank of the Mordell-Weil group of $E$ over
$K=\Fq(t^{1/d})$ in terms of a certain property of the subgroup
$\langle p\rangle_e$ of $(\Z/e\Z)^\times$ for divisors $e$ of $d$.

Consider the multiplicative group $G=(\Z/d\Z)^\times$, and recall that
$A\subset G$ is the subset of classes whose least positive residue
lies in $(0,d/2)$, and $B\subset G$ is the subset whose least positive
residue lies in $(d/2,d)$, so that $G$ is the disjoint union of $A$
and $B$.

\begin{defn}
  We say ``$p$ is balanced modulo $d$'' if every coset $C$ of $\langle p
  \rangle_d$ in $G$ satisfies $|C\cap A|=|C\cap B|$.  In other
  words, the cosets of $\langle p \rangle_d$ are all evenly divided
  between the two halves $A$ and $B$ of $G$.  In this case, we also
  say that ``$\langle p\rangle_d$ is a balanced subgroup of $G$.''
\end{defn}

For example, it is easy to see that $p$ is balanced modulo $d$ if
$d>2$ and $-1\in\langle p\rangle_d$, in other words, if $d$ divides
$p^f+1$ for some $f>0$.  See Section~\ref{s:balanced} for
several elementary properties of the balanced condition.  See also
\cite{PomeranceUlmer13} for further analysis of this condition.  

One of our main results is the following calculation of the rank of
the Legendre curve $E$ over the fields $\Fq(t^{1/d})$.

\begin{thm}\label{thm:rank}
  Let $p$ be an odd prime number, let $q$ be a power of $p$, and let
  $d$ be a positive integer not divisible by $p$.  Let $K$ be the
  field $\Fq(t^{1/d})$ and let $E$ be the elliptic curve defined by
  Equation~\eqref{eq:E}.  Then the order of vanishing at $s=1$ of
  $L(E/K,s)$ and the rank of the Mordell-Weil group $E(K)$ are both
  equal to
$$\sum_{\substack{e|d\\e>2}}\begin{cases}
\frac{\phi(e)}{o_e(q)}&\text{if $p$ is balanced modulo $e$}\\
0&\text{if not}
\end{cases}$$
where $\phi$ is Euler's function and $o_e(q)$ is the order of $q$ in
$(\Z/e\Z)^\times$. 
\end{thm}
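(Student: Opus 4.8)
The plan is to reduce the rank computation to an analytic one and then to an explicit evaluation of Jacobi sums. Since the Birch and Swinnerton--Dyer conjecture is known for $E$ over every field $\Fq(t^{1/d})$ (the main theorem of \cite{Ulmer14a}, recalled in the introduction), the rank of $E(K)$ equals $\ord_{s=1}L(E/K,s)$, and it suffices to compute this order of vanishing. Writing $T=q^{-s}$, the $L$-function is a polynomial in $T$ whose inverse roots all have absolute value $q$ by Weil's Riemann Hypothesis for the elliptic surface $\pi\colon\EE\to\P^1_u$ attached to $y^2=x(x+1)(x+u^d)$; hence $\ord_{s=1}L(E/K,s)$ is the multiplicity of $q$ as an eigenvalue of geometric Frobenius $\Fr_q$ on $H^1(\P^1_{\overline{u}},\FF)$, where $\FF=j_*R^1\pi_*\Ql$ is the associated middle-extension sheaf.

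First I would compute $L(E/K,T)$ using the Kummer cover $u\mapsto u^d=t$. The group $\mu_d$ acts by $u\mapsto\zeta u$, decomposing the cohomology into eigenspaces indexed by $a\in\Z/d\Z$ via $\chi_a(\zeta)=\zeta^a$, and $\Fr_q$ permutes them by $a\mapsto qa$. The characters of exact order $e\mid d$ (those $a$ with $\gcd(a,d)=d/e$) fall into $\langle q\rangle_e$-orbits, each of size $o_e(q)$, and there are $\phi(e)/o_e(q)$ of them; the excluded values $a=0$ and (for $d$ even) $a=d/2$ are the orbits $\{0\}$ and $\{d/2\}$, corresponding to $e\in\{1,2\}$. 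Counting points on $E$ over the fields $\Fqn(u)$ and sorting by character, $\Fr_q$ cyclically permutes the $o_e(q)$ eigenspaces in an orbit $o$, so $L(E/K,T)$ factors as a product over orbits whose factor attached to $o$ has the form $1-J_oT^{o_e(q)}$ with $|J_o|=q^{o_e(q)}$; this Jacobi-sum formula is what I would establish in Section~\ref{s:L-function}. Such a factor contributes a simple zero at $T=q^{-1}$ exactly when $J_o=q^{o_e(q)}$, and contributes nothing otherwise.

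The decisive and all-or-nothing step is to show that, for fixed $e>2$, either $J_o=q^{o_e(q)}$ for every order-$e$ orbit $o$ or for none, the former occurring precisely when $p$ is balanced modulo $e$. The uniformity is a Galois-conjugacy: as $o$ ranges over the $\langle q\rangle_e$-orbits, the Jacobi sums $J_o$ form a single orbit under $\gal(\Q(\mu_e)/\Q)\cong(\Z/e\Z)^\times$ (which acts transitively on the orbits with $\sigma_c(J_o)=J_{co}$), so they are conjugate algebraic numbers; thus either all equal the rational number $q^{o_e(q)}$ or none does. To decide which, I would write $J_o$ as a product of Gauss sums and analyze it place by place in $\Q(\mu_e)$: Weil gives $|J_o|=q^{o_e(q)}$ at every archimedean place, Jacobi sums are units away from $p$, and Stickelberger's theorem (equivalently Gross--Koblitz) evaluates the valuations at the primes $\p\mid p$ as sums of fractional parts $\langle ap^i/e\rangle$. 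The balanced condition $|C\cap A|=|C\cap B|$ is exactly the symmetry of these fractional parts about $1/2$ that makes $v_\p(J_o)=v_\p(q^{o_e(q)})$ for all $\p\mid p$; together with the archimedean normalization this forces $J_o/q^{o_e(q)}$ to be a root of unity, which a final sign computation identifies as $+1$. Summing the resulting $\phi(e)/o_e(q)$ over the balanced $e\mid d$, $e>2$, gives the stated formula.

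I expect the main obstacle to be this last equivalence. The archimedean and $p$-adic inputs only yield $|J_o/q^{o_e(q)}|=1$ at every place, so extracting the exact equality $J_o=q^{o_e(q)}$ is delicate: one must fix the correct normalization of the Jacobi sums, relate the $\langle q\rangle_e$-orbit indexing $J_o$ to the $\langle p\rangle_e$-structure governing the Stickelberger fractional parts and the balanced condition (using that $q$ is a power of $p$), and match the root-of-unity and sign bookkeeping to the purely combinatorial statement that each coset of $\langle p\rangle_e$ splits evenly between the halves $A$ and $B$ of $(\Z/e\Z)^\times$.
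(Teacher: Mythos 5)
Your overall architecture coincides with the paper's: the BSD reduction via \cite[Cor.~11.3]{Ulmer14a}, the factorization of $L(E/K,T)$ into factors indexed by the $q$-orbits on $\Z/d\Z$ (Theorem~\ref{thm:L}; the paper derives this by elementary character-sum manipulations plus the Hasse--Davenport relation rather than via the cohomology of $j_*R^1\pi_*\Ql$, but the output is identical), the Stickelberger computation showing the ratio of the Jacobi sum to the appropriate power of $q$ is a root of unity exactly when $p$ is balanced modulo $e$ (Proposition~\ref{prop:ExpJacobi}), and the orbit count $\phi(e)/o_e(q)$ (Corollary~\ref{cor:ord}). Your Galois-conjugacy remark ($\sigma_c(J_o)=J_{co}$, so the $J_o$ attached to a fixed $e$ are conjugate) is correct but unnecessary: the balanced criterion visibly depends only on $e$, so uniformity across orbits is automatic.

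The genuine gap is the step you dismiss as ``a final sign computation.''  After Stickelberger, the archimedean bound, and Kronecker's theorem, what you know is that $J_o^2/q^{|o|}$ is a root of unity in $\Q(\mu_e)$ --- a priori any element of $\pm\mu_e$, not merely $\pm1$, so this is not a sign ambiguity.  The distinction is exactly what the theorem hinges on: if the ratio were a nontrivial root of unity $\zeta$, the corresponding factor $1-J_o^2T^{|o|}=1-\zeta q^{|o|(1-s)}$ would vanish on the line $\RP(s)=1$ but \emph{not} at $s=1$, and the claimed formula would fail; your Galois-conjugacy observation cannot exclude this, since the conjugates of a nontrivial root of unity are again nontrivial roots of unity, so no contradiction arises.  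The paper needs a separate idea at this point: expanding $\sum_{u+v+1=0}(\lambda(u)-1)(\chi_i(v)-1)=J(\lambda,\chi_i)+q^{|o|}$ and noting that the left side is even because $\lambda$ takes values in $\{\pm1\}$ gives $J_o\equiv1\pmod{2\Z[\mu_e]}$; since the only roots of unity in $\Z[\mu_e]$ congruent to $1$ modulo $2$ are $\pm1$, one concludes $J_o=\pm q^{|o|/2}$, hence $J_o^2=q^{|o|}$ exactly.  Without this congruence (or an equivalent substitute, e.g.\ an explicit Gross--Koblitz evaluation), your argument establishes only that balanced orbits contribute zeros somewhere on the critical line, not the asserted value of $\ord_{s=1}L(E/K,s)$.
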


The proof will be given at the end of Section~\ref{s:Jacobi} below.
The case when $d$ divides $p^f+1$ was already treated in
\cite[Cor.~5.3]{Ulmer14a}.

\section{The $L$-function}\label{s:L-function}
\numberwithin{equation}{subsection}

Our goal in this section is to calculate the $L$-function
of $E$ in terms of Jacobi sums.

\subsection{Jacobi sums}\label{ss:Jacobi}
We review some elementary facts about Jacobi sums to set notation.
These facts can all be found, for example, in \cite{CohenNT1}
although our notations are not identical to those of \cite{CohenNT1}.

Let $\O_{\Qbar}$ be the ring of algebraic numbers and fix a prime
$\p\subset\O_{\Qbar}$ over $p$.  Then $\O_{\Qbar}/\p$ is an algebraic
closure of $\Fp$ which we denote $\Fpbar$.  All finite fields of
characteristic $p$ will be taken to be subfields of this $\Fpbar$.

Reduction modulo $\p$ defines an isomorphism from the group of roots
of unity in $\Qbar$ of order prime to $p$ to $\Fpbar^\times$.  Let
$t:\Fpbar^\times\to\Qbar^\times$ be the inverse of this homomorphism.
The restriction of $t$ to any finite field
$k^\times\subset\Fpbar^\times$ is a character of order $|k^\times|$.
We extend any character $\chi$ of $k^\times$ to $k$ by setting
$\chi(0)=0$ if $\chi$ is non-trivial and $\chi_{triv}(0)=1$.

If $k$ is a finite field and $\chi_1$ and $\chi_2$ are multiplicative
characters $k^\times\to\Qbar^\times$, we define a Jacobi sum
as follows:
$$J(\chi_1,\chi_2)=\sum_{u+v+1=0}\chi_1(u)\chi_2(v)$$
where the sum is over elements $u$ and $v\in k$.  The sum is an
algebraic integer, and when $\chi_1$, $\chi_2$, and $\chi_1\chi_2$ are
all non-trivial, it has absolute value $|k|^{1/2}$ in every complex
embedding, i.e., it is a Weil integer of size $|k|^{1/2}$.

Let $q$ be a fixed power of $p$ and let $d$ be a positive integer not
divisible by $p$.  Multiplication by $q$ defines a permutation of
$\Z/d\Z$ and we consider the orbits of this action.  Let
$o\subset\Z/d\Z$ be such an orbit; we always assume that $o\neq\{0\}$
and, if $d$ is even, that $o\neq\{d/2\}$.

We are going to define a Jacobi sum $J_o$.  To that end, let $|o|$ be
the cardinality of the orbit $o$ and let $i\in o$.  Note that if
$e=d/\gcd(d,i)$, then $|o|$ is the smallest positive integer $f$ such
that $q^{f}\equiv1\pmod e$.  
We set
$$\chi_i=t^{\frac{q^{|o|}-1}{d}i}$$ 
which we view by restriction as a character of $\F_{q^{|o|}}^\times$.
Also, we denote by $\lambda$ the non-trivial quadratic character of
$\F_{q^{|o|}}^\times$.  With these notations, we set
$$J_o=J(\lambda,\chi_i)=\sum_{u\in\F_{q^{|o|}}}\lambda(u)\chi_i(-1-u).$$
This is independent of the choice of $i$ because
$J(\lambda,\chi_{qi})=J(\lambda^q,\chi_i^q)=J(\lambda,\chi_i)$.  Since we
assumed $o\neq\{0\}$ and $\{d/2\}$, the character $\chi_i$ has order
$>2$ and so the Jacobi sum $J_o$ is a Weil integer of size
$q^{|o|/2}$.

The reader may wonder why we write $\chi_i$ rather than $\chi^i$, and
also why we introduce $e$. The reasons are that $J_o$ is naturally an
element of $\Q(\mu_e)$, and that $\chi_i$ is not in general the $i$-th
power of a character of $\F_{q^{|o|}}^\times$.

\subsection{The $L$-function}
In this subsection, we give an elementary calculation of the
Hasse-Weil $L$-function of $E$ over $\Fq(t)$. 

\begin{thm}\label{thm:L}  Let $O$ be the set of orbits for
  multiplication by $q$ on $\Z/d\Z$ with the orbits $\{0\}$ and
  \textup{(}when $d$ is even\textup{)} $\{d/2\}$ omitted.  For each
  $o\in O$, let $J_o$ be the Jacobi sum defined above, and let $|o|$ be
  the cardinality of $o$.  Then the Hasse-Weil $L$-function of $E$
  over the field $K=\Fq(t^{1/d})$ is
$$L(E/K,T)=\prod_{o\in O}
\left(1-J_o^2T^{|o|}\right).$$ In particular, when $q\equiv1\pmod d$
we have
$$L(E/K,T)=\prod_{\substack{i=1\\i\neq d/2}}^{d-1}
\left(1-J(\lambda,\chi_i)^2T\right).$$
\end{thm}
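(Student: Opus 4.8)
The theorem computes the Hasse-Weil $L$-function of the elliptic curve $E: y^2 = x(x+1)(x+t)$ over $K = \Fq(t^{1/d})$, expressing it as a product $\prod_{o\in O}(1 - J_o^2 T^{|o|})$ over orbits of multiplication-by-$q$ on $\Z/d\Z$, with the special orbits $\{0\}$ and $\{d/2\}$ removed.

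**The standard approach.** This is a classic computation for an elliptic surface. The $L$-function of $E$ over $K$ is defined as a product of local factors over places of $K$, and by the cohomological interpretation (Grothendieck-Lefschetz), $L(E/K, T) = \det(1 - \Fr_q \cdot T \mid H^1)$ where $H^1$ is the relevant piece of étale cohomology of the associated elliptic surface over $\Fq$. The key to making this explicit is to exploit the automorphisms: $E$ over $K$ is a base change / twist of the Legendre curve, and the substitution $t \mapsto u^d$ introduces a $\mu_d$-action. The plan is to decompose the cohomology under this $\mu_d$-action into eigenspaces indexed by characters of $\mu_d$, i.e., by elements $i \in \Z/d\Z$.

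**Concrete strategy via exponential sums.** I would compute the $L$-function directly as an Euler product and evaluate it through counting points / exponential sums, then identify the result with Jacobi sums. The main steps: (1) Reduce the $L$-function to a sum over the base $\mathbb{P}^1$ (with coordinate $u$) of the traces of Frobenius on the fibers $E_u: y^2 = x(x+1)(x+u^d)$; because $E$ over $\Fq(t^{1/d})$ is the pullback of the Legendre family under $t = u^d$, the local factors are governed by $a_u = q - \#E_u(\Fq)$ for good fibers. (2) Express $a_u$ as a character sum. The number of points on $y^2 = f(x)$ is captured by the quadratic character $\lambda$, so $a_u = -\sum_{x} \lambda(x(x+1)(x+u^d))$. (3) Sum over $u$ and over Frobenius orbits. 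The crucial manipulation is to open up the dependence on $u^d$ using multiplicative characters: writing $u^d = w$ and summing over $w$ with the appropriate character orthogonality introduces characters $\chi_i = t^{((q^{|o|}-1)/d)i}$ of order dividing $d$. Collecting terms, the inner sum over $x$ becomes exactly a Jacobi sum $J(\lambda, \chi_i)$, up to the standard change of variables $u + v + 1 = 0$ that defines $J$.

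**Organizing by orbits and the $q\equiv 1$ specialization.** The orbit structure enters because the $L$-function over $\Fq$ (rather than over $\F_{q^{|o|}}$) groups together the Frobenius conjugates: the character $\chi_i$ is only defined over $\F_{q^{|o|}}$ where $q^{|o|} \equiv 1 \pmod{e}$, and Frobenius permutes $\{i, qi, q^2 i, \dots\}$ cyclically, so each orbit $o$ contributes a single factor $(1 - J_o^2 T^{|o|})$ with $|o|$ the orbit size — this is precisely the identity $J(\lambda,\chi_{qi}) = J(\lambda,\chi_i)$ noted in Subsection~\ref{ss:Jacobi}, which guarantees the factor is well-defined and has $\Q$-rational (indeed integral) coefficients. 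The special orbits $\{0\}$ and $\{d/2\}$ are omitted because there $\chi_i$ is trivial or quadratic, making the corresponding fibers degenerate (bad reduction of the family at $t = 0, \infty$, and the specific extra bad fibers); these contribute to the trivial factors / the Euler factor structure and must be excised separately. The final assertion for $q \equiv 1 \pmod d$ is then immediate: every orbit is a singleton $\{i\}$, so $|o| = 1$ and the product runs over all $i \in \{1, \dots, d-1\} \setminus \{d/2\}$, each contributing $(1 - J(\lambda,\chi_i)^2 T)$.

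**The main obstacle.** The hardest part is the careful bookkeeping of the \emph{bad fibers} and the precise matching of local Euler factors — ensuring that the naive global exponential sum really equals $\det(1 - \Fr_q T \mid H^1)$ with no spurious factors from the fibers over $t = 0, 1, \infty$ and the additive-reduction places. In other words, the delicate point is not the Jacobi-sum identity (which is formal once the characters are set up) but verifying that the trivial factors $\det(1 - \Fr_q T \mid H^0), \det(1 - \Fr_q T \mid H^2)$ cancel correctly and that removing the orbits $\{0\}$ and $\{d/2\}$ exactly accounts for all the contributions not of the stated form. I would handle this by working with the open subset where the family has good reduction, computing the compactly-supported cohomology there, and then accounting for the boundary via the conductor-discriminant / local factor analysis, citing the BSD-type framework of \cite{Ulmer14a} to control the Tamagawa and bad-reduction data rather than recomputing it.
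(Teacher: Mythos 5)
Your overall route is the same as the paper's: expand $\log L(E/K,T)$ as a sum over $\F_{q^n}$-points of the $u$-line, write the fiber traces with the quadratic character $\lambda$, and open up the $d$-th power map $u\mapsto u^d$ by character orthogonality. But the two non-routine steps of that computation are exactly the places where your sketch is wrong or missing an idea. First, you assert that ``the inner sum over $x$ becomes exactly a Jacobi sum $J(\lambda,\chi_i)$, up to the standard change of variables.'' It does not: at that stage one has the double sum $\sum_{\alpha}\psi^i(\alpha)\sum_{\gamma}\lambda\bigl(\gamma(\gamma+1)(\gamma+\alpha)\bigr)$, and no single change of variables turns this into one Jacobi sum. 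The paper's key manipulation is the substitution $\alpha\mapsto\gamma\alpha$, after which the double sum factors as a product of two identical single sums, $\bigl(\sum_{\gamma}\lambda(\gamma+1)\psi^i(\gamma)\bigr)\bigl(\sum_{\alpha}\lambda(\alpha+1)\psi^i(\alpha)\bigr)$, i.e.\ as $J(\lambda,\psi^i)^2$. This factorization is the sole source of the square in the factors $(1-J_o^2T^{|o|})$; your proposal has no mechanism that produces it.

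Second, your justification of the orbit structure is insufficient. The identity $J(\lambda,\chi_{qi})=J(\lambda,\chi_i)$ only shows that $J_o$ is independent of the choice of $i\in o$; it does not let you match the coefficient of $T^n$ in $\log\prod_{o}(1-J_o^2T^{|o|})$, which involves the powers $J_o^{2n/|o|}$ of Jacobi sums formed over $\F_{q^{|o|}}$, against the point counts over $\F_{q^n}$, which involve Jacobi sums formed over $\F_{q^n}$. The paper supplies precisely this comparison with the Hasse--Davenport relation, which converts the $(2n/|o|)$-th power of a Jacobi sum over $\F_{q^{|o|}}$ into the square of the corresponding Jacobi sum over $\F_{q^n}$; in the elementary approach this ingredient is indispensable. (The cohomological framing you gesture at in your first and last paragraphs would make this automatic, but then you would have to actually identify Frobenius eigenvalues with Jacobi sums and treat the bad fibers cohomologically---work you defer rather than do. In the paper the bad-fiber bookkeeping is handled elementarily and locally: $a_{\infty,q^n}$ equals $1$ for $d$ even and $0$ for $d$ odd, and it cancels exactly against the omitted term $J(\lambda,\psi^{g/2})^2=1$, while $J(\lambda,\psi^0)=0$ disposes of the trivial character; no appeal to compactly supported cohomology or to the BSD framework of \cite{Ulmer14a} is needed for this theorem.)
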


\begin{proof}
By definition,
$$L(E/K,T)=
\prod_{\text{good  }v}\left(1-a_vT^{\deg(v)}+q_vT^{2\deg(v)}\right)^{-1}
\prod_{\text{bad }v}\left(1-a_vT^{\deg(v)}\right)^{-1}
$$
where the products are over places of $K$ where $E$ has good or bad
reduction, $q_v$ is the cardinality of the residue field at $v$, and
$a_v$ is defined by counting the number of points on the plane cubic
model of $E$ at $v$: $|E(\F_{q_v})|=q_v+1-a_v$.  We note in particular
that $a_v=1,-1,0$ in the case when $E$ has split multiplicative,
non-split multiplicative, or additive reduction respectively.

Expanding $\log L(E/K,T)$ as a series in $T$ and rearranging,
we find that
\begin{equation}\label{eq:lhs1}
\log L(E/K,T)=
\sum_{n=1}^\infty\frac{T^n}{n}\sum_{\beta\in\P^1(\F_{q^n})}a_{\beta,q^n}
\end{equation}
where the sum is over the $\F_{q^n}$-valued points of $\P^1$ and
$a_{\beta,q^n}$ is defined by requiring that $q^n+1-a_{\beta,q^n}$ is the
number of $\F_{q^n}$-valued points on the reduction of $E$ at $\beta$.

Now focus on a particular value of $n$.  We group the points
$\beta\in\P^1(\F_{q^n})$ by their images under the morphism
$\rho:\P^1\to\P^1$, $\beta\mapsto \alpha=\beta^d$ corresponding to the
field extension $\Fq(t^{1/d})/\Fq(t)$.  Let $g=\gcd(q^n-1,d)$ and let
$\psi$ be the character $\psi=t^{(q^n-1)/g}$ of $\F_{q^n}^\times$.
Then the number of $\F_{q^n}$-valued points $\beta$ over a fixed
$\alpha\neq0,\infty$ is either 0 or $g$ depending on whether $\alpha$
is an $g$-th power or not.  This value is equal to the character sum
$\sum_{i=0}^g\psi^i(\alpha)$.  Assuming that $\alpha\neq0,\infty$ so
that $\rho$ is unramified over $\alpha$, we have that
$a_{\beta,q^n}=a_{\alpha,q^n}$.  Thus
$$\sum_{\beta\in\P^1(\F_{q^n})}a_{\beta,q^n}=
a_{0,q^n}+a_{\infty,q^n}
+\sum_{\substack{\alpha\in\P^1(\F_{q^n})\\\alpha\neq0,\infty}}
\sum_{i=0}^{g-1}\psi^i(\alpha)a_{\alpha,q^n}.$$

Now we write $a_{\alpha,q^n}$ as a character sum.  Let $\lambda$ be the
non-trivial character of $\F_{q^n}^\times$ of order 2.  Then by a standard
calculation, for all finite $\alpha$ we have
$$a_{\alpha,q^n}=-\sum_{\gamma\in\F_{q^n}}
\lambda(\gamma(\gamma+1)(\gamma+\alpha)).$$ 

Combining the last two paragraphs, we have
$$\sum_{\beta\in\P^1(\F_{q^n})}a_{\beta,q^n}=a_{\infty,q^n}-\sum_{\alpha\in\F_{q^n}}
\sum_{i=0}^{g-1}\psi^i(\alpha)\sum_{\gamma\in\F_{q^n}}
\lambda(\gamma(\gamma+1)(\gamma+\alpha)).$$

Changing the order of summation and replacing  $\alpha$ with
$\gamma\alpha$, we have 
$$\sum_{\beta\in\P^1(\F_{q^n})}a_{\beta,q^n}=
a_{\infty,q^n}-\sum_{i=0}^{g-1}
\sum_{\gamma\in\F_{q^n}}\lambda(\gamma+1)\psi^i(\gamma)
\sum_{\alpha\in\F_{q^n}}
\lambda(\alpha+1)\psi^i(\alpha).$$

Now since $\lambda(-1)^2=1$, we may change $\lambda(\alpha+1)$ and
$\lambda(\gamma+1)$ to $\lambda(-\alpha-1)$ and $\lambda(-\gamma-1)$.
We find that
$$\sum_{\beta\in\P^1(\F_{q^n})}a_{\beta,q^n}=
a_{\infty,q^n}-\sum_{i=0}^{g-1}
J(\lambda,\psi^i)^2.$$

Finally we note that $J(\lambda,\psi^0)=0$ and, if $g$ is even,
$J(\lambda,\psi^{g/2})^2=1$.  On the other hand, using the reduction
types of $E$ \cite[\S7]{Ulmer14a} and what we said above about
$a_v$ at places of bad reduction, we have that
$$a_{\infty,q^n}=\begin{cases}
1&\text{if $d$ is even}\\
0&\text{if $d$ is odd.}
\end{cases}$$
Thus 
\begin{equation}\label{eq:lhs2}
\sum_{\beta\in\P^1(\F_{q^n})}a_{\beta,q^n}=
-\sum_{\substack{i=1\\i\neq g/2}}^{g-1}
J(\lambda,\psi^i)^2.
\end{equation}

Now we turn to the expression on the right hand side of the formula of
the theorem.  Expanding its $\log$ as a series in $T$ and
rearranging, we have
\begin{equation}\label{eq:rhs1}
\log \prod_{\substack{o\subset\Z/d\Z\\o\neq\{0\},\{d/2\}}}
\left(1-J_o^2T^{|o|}\right)= -\sum_{n=1}^\infty\frac{T^n}n
\sum_{\substack{o\subset\Z/d\Z\\o\neq\{0\},\{d/2\}\\|o|\text{ divides }n}}
J_o^{2n/|o|}|o|
\end{equation}
The union of the orbits which appear here (namely those where $|o|$
divides $n$) is the set of elements $j\in\Z/d\Z$ such that
$(q^n-1)j\equiv0\pmod d$, or equivalently, those divisible by
$e=d/\gcd(q^n-1,d)$.  For such an orbit,
$$J_o^{2n/|o|}|o|=\sum_{j\in o}J\left(\lambda_{q^{|o|}},\chi_j\right)^{2n/|o|}
=\sum_{j\in o}J\left(\lambda_{q^{|o|}},t^{\frac{q^{|o|}-1}dj}\right)^{2n/|o|}
$$
where we write $\lambda_{q^{|o|}}$ to emphasize that in this formula
$\lambda$ is the quadratic character of $\F_{q^{|o|}}$.
Summing over the orbits with $|o|$ dividing $n$, we have
$$\sum_{\substack{o\subset\Z/d\Z\\o\neq\{0\},\{d/2\}\\|o|\text{ divides }n}}
J_o^{2n/|o|}|o|=\sum_{\substack{i=1\\i\neq g/2}}^{g-1}
J\left(\lambda_{q^{|o|}},t^{\frac{q^{|o|}-1}gi}\right)^{2n/|o|}.$$ 
Now the Hasse-Davenport relation \cite[3.7.4]{CohenNT1} shows that
$$J\left(\lambda_{q^{|o|}},t^{\frac{q^{|o|}-1}gi}\right)^{2n/|o|}=
J\left(\lambda_{q^{n}},t^{\frac{q^{n}-1}gi}\right)^{2}=J\left(\lambda_{q^n},\psi^i\right)^2.$$
Thus we have
\begin{equation}\label{eq:rhs2}
\sum_{\substack{o\subset\Z/d\Z\\o\neq\{0\},\{d/2\}\\|o|\text{ divides }n}}
J_o^{2n/|o|}|o|=\sum_{\substack{i=1\\i\neq g/2}}^{g-1}
J\left(\lambda_{q^n},\psi^i\right)^2.
\end{equation}

Comparing equations \eqref{eq:lhs1} and \eqref{eq:lhs2} with
\eqref{eq:rhs1} and \eqref{eq:rhs2} completes the proof of the theorem.
\end{proof}

\section{Explicit Jacobi sums}\label{s:Jacobi}
\numberwithin{equation}{section}
In this section we will make the Jacobi sums of the previous section
sufficiently explicit to calculate the order of vanishing of the
$L$-function at $s=1$ (i.e., at $T=q^{-1}$).

\begin{prop}\label{prop:ExpJacobi}
  Let $o\subset\Z/d\Z$ be an orbit for multiplication by $q$ with
  $o\neq\{0\}$ and $o\neq\{d/2\}$.  Let $e=d/\gcd(d,i)$ for any $i\in
  o$.  Then $J^2_o/q^{|o|}$ is a root of unity if and only if $p$ is
  balanced modulo $e$.  Moreover, if $p$ is balanced modulo $e$ then
  $J^2_o=q^{|o|}$.
\end{prop}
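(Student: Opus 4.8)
The plan is to reduce everything to the $\p$-adic valuations of $J_o$ at the primes of $\Q(\mu_e)$ above $p$, computed via Stickelberger's theorem. First recall that $J_o$ is a Weil integer of size $q^{|o|/2}$, so in every archimedean embedding $|J_o^2/q^{|o|}|=1$; moreover, since the norm of $J_o$ is a power of $p$, the only finite primes dividing $J_o$ lie above $p$. Hence, by Kronecker's theorem, $J_o^2/q^{|o|}$ is a root of unity if and only if it is a unit at every prime above $p$, i.e. if and only if $2\,\ord_v(J_o)=\ord_v(q^{|o|})$ for every valuation $v$ over $p$. I would write $J_o=J(\lambda,\chi_i)=g(\lambda)g(\chi_i)/g(\lambda\chi_i)$ as a ratio of Gauss sums (legitimate since $e>2$ makes all three of $\lambda$, $\chi_i$, $\lambda\chi_i$ nontrivial), and use Stickelberger in the form $\ord_v(g(\omega^{-a}))=(p-1)\sum_{k}\{p^kca/(Q-1)\}$, where $Q=q^{|o|}=p^M$, $\omega$ is the Teichm\"uller character (the character denoted $t$ in the paper), $c$ indexes the prime $v$, and $\{\cdot\}$ is the fractional part.

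The key calculation is then short. Writing $\chi_i=\omega^{\pm(Q-1)i'/e}$ with $i'$ prime to $e$ and $\lambda=\omega^{-(Q-1)/2}$, each summand coming from $\lambda$ contributes $\{p^kc/2\}=1/2$ (as $c$ is odd), so, setting $r_k=p^kci'\bmod e$, one finds
$$\frac{\ord_v(J_o)}{p-1}=\frac M2+\sum_{k=0}^{M-1}\left(\frac{r_k}{e}-\Big\{\frac{r_k}{e}+\frac12\Big\}\right)=N_B,$$
where $N_B$ counts the $k$ with $r_k$ in the upper half $(e/2,e)$, using that $r_k$, being a unit mod $e$, never equals $0$ or $e/2$. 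Since $\ord_v(q^{|o|})=M(p-1)$, the unit condition $2\,\ord_v(J_o)=\ord_v(q^{|o|})$ becomes $N_A=N_B$, where $N_A$ counts the $r_k$ in the lower half. As $k$ runs over $0,\dots,M-1$ the residues $r_k$ sweep out $M/o_e(p)$ copies of the coset $ci'\langle p\rangle_e$, and as $v$ (equivalently $c$) runs over the primes above $p$ this coset runs over all cosets of $\langle p\rangle_e$ in $(\Z/e\Z)^\times$. Hence $N_A=N_B$ holds at every $v$ exactly when every coset meets $A$ and $B$ equally, that is, when $p$ is balanced modulo $e$. This proves the first assertion.

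For the second assertion I would argue that the root of unity $\zeta:=J_o^2/q^{|o|}\in\Q(\mu_e)$ must equal $1$. The crucial point is that $p\nmid e$ (since $e\mid d$ and $p\nmid d$), so $\Q(\mu_e)$ contains no nontrivial $p$-power root of unity and reduction modulo a prime $\mathfrak P$ above $p$ is injective on its roots of unity. Thus it suffices to show $\zeta\equiv1\pmod{\mathfrak P}$. I would obtain this from the refined Stickelberger congruence (equivalently the Gross--Koblitz formula), which gives the residue of each $g(\omega^{-a})/\pi^{s(a)}$ modulo $\mathfrak P$ as a reciprocal product of digit-factorials, with $\pi^{p-1}=-p$; combining the leading terms of the three Gauss sums in $J_o=g(\lambda)g(\chi_i)/g(\lambda\chi_i)$ and squaring, the balanced condition forces the digit data to pair up so that the residue of $\zeta$ is $1$. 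An equivalent and perhaps cleaner route is to invoke Hasse--Davenport to reduce to the minimal field $\F_{p^{o_e(p)}}$ over which the order-$e$ character is defined, using $J_o^2/q^{|o|}=(J_0^2/p^{o_e(p)})^{M/o_e(p)}$.

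The main obstacle is this second step: the first part is a clean valuation count, but pinning the root of unity down to exactly $1$ (rather than merely to $|\zeta|=1$) requires the finer leading-coefficient information of Stickelberger/Gross--Koblitz and the organization of the resulting digit-factorial identity so that the balanced hypothesis makes it collapse to $1$. In the special case $-1\in\langle q\rangle_e$ this is immediate, since then $\overline{J_o}=J(\lambda,\chi_i^{-1})=J_o$ shows $J_o$ is real and hence $J_o^2=|J_o|^2=q^{|o|}$; the real work is to handle the genuinely balanced-but-not-symmetric cases by the congruence argument above.
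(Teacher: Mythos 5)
Your first assertion is proved correctly, and by essentially the same route as the paper: Stickelberger's theorem computes the valuations of $J_o$ at the primes of $\Q(\mu_e)$ over $p$, the unit condition becomes an equidistribution count between the two halves of $(\Z/e\Z)^\times$, and letting the prime (equivalently, the Galois conjugate) vary sweeps out the cosets of $\langle p\rangle_e$. Whether one factors $J_o$ into Gauss sums, as you do, or applies Stickelberger directly to the Jacobi sum, as the paper does, the content is identical.

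The gap is in the second assertion, and you have flagged it yourself: your reduction to showing $J_o^2/q^{|o|}\equiv 1$ modulo a prime $\mathfrak{P}$ over $p$ is legitimate (reduction is injective on the prime-to-$p$ roots of unity of $\Q(\mu_e)$), but the claim that Gross--Koblitz ``forces the digit data to pair up so that the residue is $1$'' is precisely the statement to be proved, and nothing in your sketch establishes it; a priori the residue computation could just as well produce $-1$, and organizing the $p$-adic Gamma factors under only the balanced hypothesis is the hard part. The conjugation argument you give covers only the case $-1\in\langle q\rangle_e$, which is exactly the case this proposition is designed to go beyond. The paper closes the argument by working at the prime $2$ instead of $p$, exploiting that $\lambda$ is quadratic: since $\sum_u\lambda(u)=\sum_v\chi_i(v)=0$, one has
$$\sum_{u+v+1=0}\bigl(\lambda(u)-1\bigr)\bigl(\chi_i(v)-1\bigr)=J(\lambda,\chi_i)+q^{|o|},$$
and because $\lambda$ takes values in $\{0,\pm1\}$ every term on the left is even, so $J_o\equiv -q^{|o|}\equiv 1\pmod{2\Z[\mu_e]}$. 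Since the only roots of unity in $\Z[\mu_e]$ congruent to $1$ modulo $2$ are $\pm1$, it follows (given part one) that $J_o=\pm q^{|o|/2}$, hence $J_o^2=q^{|o|}$. This elementary mod-$2$ congruence avoids all $p$-adic leading-term analysis; to complete your proof you would need either to carry out the Gross--Koblitz computation in full generality or to substitute an argument of this kind.
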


\begin{proof}
  We first show that $J^2_o/q^{|o|}$ is
  a root of unity if and only if $p$ is balanced modulo $e$.  

  Note that $J_o\in\Q(\mu_e)$.  For $a\in(\Z/e\Z)^\times$, let
  $\sigma_a\in\gal(\Q(\mu_e)/\Q)$ be the automorphism with
  $\sigma_a(\zeta_e)=\zeta_e^a$.  Recall that we fixed a prime $\p$ of
  $\Qbar$ over $p$.  We write $\p$ also for the prime of $\Q(\mu_e)$
  that it induces.

  For a rational number $r$, write $\langle r \rangle$ for the
  fractional part of $r$, i.e., the number in $[0,1)$ such that
  $r-\langle r \rangle\in\Z$.

  We set $e=d/\gcd(d,i)$ and $i'=i/\gcd(d,i)$.  Stickelberger's
  theorem (e.g., \cite[Thm.~3.6.6 and Prop.~2.5.14]{CohenNT1}) gives
  the valuation of
$$J_o=J(\lambda,\chi_i)=J\left(t^{(q^f-1)/2},t^{(q^f-1)i/d}\right)$$
at the prime $\sigma_a(\p)$ as
$$-\nu+\sum_{j=0}^{\nu-1}\left\langle\frac{ap^j}{2}\right\rangle
+\left\langle\frac{ai'p^j}{e}\right\rangle
+\left\langle\frac{a(-i'-e/2)p^j}{e}\right\rangle$$ 
where $\nu$ satisfies $q^{|o|}=p^\nu$ and the valuation of $p$ is 1.

Since $J^2_o/q^{|o|}$ is a unit away from primes over $p$, it is a
root of unity if and only if its valuation at every prime over $p$ is
0, or equivalently, if and only the displayed quantity is equal to
$\nu/2$ for all $a$.

Now we note that if $ai'p^j$ has least positive residue modulo $e$ in
the interval $(0,e/2)$, then the sum of the three fractional parts in
the display above is 1; on the other hand, if $ai'p^j$ has least
positive residue modulo $e$ in $(e/2,e)$, then the sum is 2.  Thus the
displayed quantity is $\nu/2$ if and only if exactly half of the
elements of the coset
$ai'\langle p\rangle_e\subset(\Z/e\Z)^\times$ have least positive
residue in $(0,e/2)$ and the other half have least positive residue in
$(e/2,e)$.  This holds for all $a\in(\Z/e\Z)^\times$ if and only if
$p$ is balanced modulo $e$.  This completes the proof that
$J^2_o/q^{|o|}$ is a root of unity if and only if $p$ is balanced
modulo $e$.

Now we assume that $p$ is balanced modulo $e$ and we check that the
root of unity $J^2_o/q^{|o|}$ is in fact 1.  To that end, consider
$\sum_{u+v+1=0}(\lambda(u)-1)(\chi_i(v)-1)$ (cf. \cite[2.5.11
(2)]{CohenNT1}).  Since $\sum_{u}\lambda(u)=\sum_v\chi_i(v)=0$, we
have
$$\sum_{u+v+1=0}(\lambda(u)-1)(\chi_i(v)-1)=J(\lambda,\chi_i)+q^{|o|}.$$
Since $\lambda$ takes values in $\{\pm1\}$, the sum is zero modulo 2.
Thus $J_o=J(\lambda,\chi_i)\equiv1\pmod {2\Z[\mu_e]}$.  But it is easy to
see that the only roots of unity in $\Z[\mu_e]$
congruent to $1\pmod 2$ are $\pm1$.  Thus $J_o=\pm q^{|o|/2}$
and $J_o^2=q^{|o|}$.
\end{proof}



\begin{cor}\label{cor:ord}
Let $K=\Fq(t^{1/d})$.
The order of vanishing of $L(E/K,s)$ at $s=1$ is
$$\sum_{\substack{e|d\\e>2}}\begin{cases}
\frac{\phi(e)}{o_e(q)}&\text{if $p$ is balanced modulo $e$}\\
0&\text{otherwise}
\end{cases}$$
where $\phi$ is Euler's function and $o_e(q)$ is the order of $q$ in
$(\Z/e\Z)^\times$. 
\end{cor}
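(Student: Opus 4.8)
The plan is to combine the factorization of the $L$-function from Theorem~\ref{thm:L} with the explicit evaluation of the Jacobi sums in Proposition~\ref{prop:ExpJacobi}. Since $L(E/K,T)=\prod_{o\in O}(1-J_o^2T^{|o|})$, the order of vanishing at $s=1$, i.e. at $T=q^{-1}$, is simply the number of factors (counted with multiplicity) that vanish there. For a single orbit $o$, the factor $1-J_o^2T^{|o|}$ vanishes at $T=q^{-1}$ exactly when $J_o^2 q^{-|o|}=1$, that is, when $J_o^2=q^{|o|}$. So first I would reduce the computation to counting orbits $o\in O$ for which $J_o^2=q^{|o|}$.

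Next I would invoke Proposition~\ref{prop:ExpJacobi}: for an orbit $o$ with associated divisor $e=d/\gcd(d,i)$ (for any $i\in o$), we have $J_o^2=q^{|o|}$ precisely when $p$ is balanced modulo $e$, and otherwise $J_o^2/q^{|o|}$ is a root of unity different from $1$ (indeed, when $p$ is \emph{not} balanced mod $e$ the quotient is not even a root of unity, so in particular the factor does not vanish at $T=q^{-1}$; one should note here that a non-root-of-unity value certainly cannot equal $1$, and the size constraint $|J_o|=q^{|o|/2}$ rules out any other coincidence). Hence the contributing orbits are exactly those whose associated $e$ satisfies the balanced condition, and each such orbit contributes $1$ to the order of vanishing.

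The remaining step, which I expect to be the main bookkeeping obstacle, is to reorganize the count of contributing orbits into the stated sum over divisors $e\mid d$ with $e>2$. The key observation is that the map sending an orbit $o$ to its invariant $e=d/\gcd(d,i)$ partitions $O$ according to $e$, and that for a fixed $e>2$ the set of residues $i\in\Z/d\Z$ with $d/\gcd(d,i)=e$ is in bijection with $(\Z/e\Z)^\times$, which has $\phi(e)$ elements. The action of $\langle q\rangle_d$ on these residues corresponds to the action of $\langle q\rangle_e$ on $(\Z/e\Z)^\times$, and since this action is free (multiplication by $q$ in a group), every orbit has size exactly $o_e(q)$, the order of $q$ modulo $e$. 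Therefore the number of orbits $o$ with invariant $e$ equals $\phi(e)/o_e(q)$. I would also check that excluding the orbits $\{0\}$ and $\{d/2\}$ from $O$ corresponds exactly to excluding $e=1$ and $e=2$, which is why the sum runs over $e>2$.

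Assembling these pieces, the order of vanishing equals $\sum_{e\mid d,\,e>2}(\phi(e)/o_e(q))$ where the summand is taken only when $p$ is balanced modulo $e$ and is zero otherwise, which is precisely the claimed formula. The only genuinely delicate point is confirming that the orbit-counting via $\gcd$ and the freeness of the $\langle q\rangle_e$-action give the factor $\phi(e)/o_e(q)$ cleanly; everything else is a direct translation from Theorem~\ref{thm:L} and Proposition~\ref{prop:ExpJacobi}.
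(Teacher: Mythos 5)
Your proposal is correct and follows essentially the same route as the paper: factor $L(E/K,T)$ as in Theorem~\ref{thm:L}, use Proposition~\ref{prop:ExpJacobi} to see that the factor for an orbit $o$ vanishes at $T=q^{-1}$ exactly when $p$ is balanced modulo $e=d/\gcd(d,i)$, and then count the orbits with a given invariant $e$ as $\phi(e)/o_e(q)$ (your freeness argument for the $\langle q\rangle_e$-action is precisely the justification the paper leaves implicit, and your observation that $\{0\}$ and $\{d/2\}$ correspond to $e=1,2$ matches the restriction to $e>2$).
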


\begin{proof}
As in Theorem~\ref{thm:L}, let $O$ be the set of orbits
$o\neq\{0\},\{d/2\}$ for multiplication by $q$ on $\Z/d\Z$.  Write
$O_e$ for the subset of orbits $o$ where $\gcd(i,d)=e$ for $i\in
o$, so that $O$ is the disjoint union of the $O_e$ as $e$ runs
through the divisors of $d$.  Then by Theorem~\ref{thm:L}, we have
$$L(E/K,T)=\prod_{e|d}\prod_{o\in O_e}(1-J_o^2T^{|o|}).$$
Since $T=q^{-s}$, for $o\in O_e$ the factor $(1-J_o^2T^{|o|})$
contributes a simple zero at $s=1$ if and only if $p$ is balanced
modulo $e$.  On the other hand, the cardinality of the orbits in $O_e$
is $o_e(q)$, so the number of orbits in $O_e$ is $\phi(e)/o_e(q)$.
Summing over $e$ gives the asserted order of vanishing.
\end{proof}

\begin{proof}[Proof of Theorem~\ref{thm:rank}]
It is proven in \cite[Cor.~11.3]{Ulmer14a} that the Birch and
Swinnerton-Dyer conjecture holds for $E/K$, i.e., we have 
$$\ord_{s=1}L(E/K,s)=\rk E(K).$$
Thus Theorem~\ref{thm:rank} follows immediately from
Corollary~\ref{cor:ord}. 
\end{proof}

\section{Comments on balanced subgroups}\label{s:balanced}
In this section, we make several remarks on the condition that $p$ be
balanced modulo $d$.

\subsection{}
First, we note that the cyclicity of $\langle p\rangle_d$ plays no
role in the definition of balanced.  We may thus define the notion of
a balanced subgroup of $G=(\Z/d\Z)^\times$.  Namely, we say a subgroup
$H$ ``is balanced in $G$'' or ``is balanced modulo $d$'' if for every
coset $gH$ of $H$, the sets $gH\cap A$ and $gH\cap B$
have the same cardinality.

\subsection{}
Next we note that if $H$ and $H'$ are subgroups of $G$ with $H\subset
H'$ and if $H$ is balanced, then so is $H'$.  Indeed, the cosets of
$H'$ are unions of cosets of $H$, so are equally distributed between
$A$ and $B$.

\subsection{}
We call a balanced subgroup ``minimal'' if it does not properly contain
another balanced subgroup.  Since $\{1\}$ is never balanced, a
balanced subgroup of order 2 is automatically minimal.  Examples show
that there can be distinct minimal balanced subgroups for a fixed $d$,
or equivalently, that the intersection of two balanced subgroups need
not be balanced.  For example, if $d=39$, the cyclic subgroups
generated by $7$ and $29$ are balanced, but their intersection (which
is generated by $16$) is not balanced.

\subsection{}
It is clear that $\{\pm1\}\subset\Z/d\Z$ is balanced since $-A=B$.
Therefore, if a subgroup $H$ contains $-1$, then $H$ is balanced.  For
the case $H=\langle p\rangle_d$, this means that $p$ is balanced modulo
$d$ if some power of $p$ is congruent to $-1$ modulo $d$, or
equivalently, if $d$ divides $p^f+1$ for some $f$.  This is the case
that is studied in \cite{Ulmer14a}.

\subsection{}\label{ss:d/2+1}
It is equally clear that if $4|d$ then $\{1,d/2+1\}$ is balanced,
again because $(d/2+1)A=B$.  Therefore $\langle p\rangle_d$ is
balanced when some power of $p$ is congruent to $d/2+1$.  This case
occurs when $p$ is odd and $d=2(p^f-1)$, or more generally when $p$ is
odd, and there is an $f$ such that $d$ divides $2(p^f-1)$ with an odd
quotient.  

Note that if $d$ divides $2(p^f-1)$ with odd quotient, then the same is
true of $d/e$ for every odd $e$ dividing $d$. Thus $p$ is balanced
modulo $d/e$ for every odd $e$ dividing $d$. 
Applying Theorem~\ref{thm:rank} yields the following result.

\begin{prop}\label{prop:d/2+1}
  Suppose that $p$ is odd, $d$ divides $2(p^f-1)$ with an odd
  quotient, $u^d=t$, and $q\equiv1\pmod d$.  Then
$$\rk E(\Fq(u)))=\rk E(\Fq(u^2))+ d/2.$$
\end{prop}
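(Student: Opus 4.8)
The plan is to reduce everything to Theorem~\ref{thm:rank} and then perform an elementary divisor-sum computation. First I would record that the hypotheses force $4\mid d$: since $p$ is odd, $p^f-1$ is even, so $2(p^f-1)$ is divisible by $4$; if the quotient $2(p^f-1)/d$ is odd then $d$ must absorb the entire $2$-part of $2(p^f-1)$, whence $v_2(d)=v_2(2(p^f-1))\ge 2$. In particular $d$ is even, and because $u^d=t$ we have $(u^2)^{d/2}=t$, so $\Fq(u^2)=\Fq(t^{1/(d/2)})$. Thus the assertion is equivalent to $\rk E(\Fq(t^{1/d}))-\rk E(\Fq(t^{1/(d/2)}))=d/2$.

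Next I would apply Theorem~\ref{thm:rank} to both fields. Since $q\equiv 1\pmod d$, we have $q\equiv 1\pmod e$ and hence $o_e(q)=1$ for every $e\mid d$, so the rank formula collapses to $\rk E(\Fq(t^{1/d}))=\sum_{e\mid d,\,e>2,\,p\text{ balanced mod }e}\phi(e)$, and likewise for $d/2$. Subtracting, the contributions from divisors $e$ that also divide $d/2$ cancel, and one is left with a sum over the divisors $e\mid d$ with $e\nmid d/2$. Writing $d=2^a m$ with $m$ odd and $a=v_2(d)\ge 2$, these are exactly the divisors with $v_2(e)=a$, i.e.\ $e=2^a m'$ with $m'\mid m$; equivalently $e=d/e'$ as $e'$ ranges over the odd divisors of $d$.

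I would then invoke the observation recorded in Subsection~\ref{ss:d/2+1}: because $d\mid 2(p^f-1)$ with odd quotient, the same holds for $d/e'$ for every odd $e'\mid d$, so $p$ is balanced modulo each such $e=d/e'$. Hence the balanced condition is automatically met for every divisor $e$ surviving in the difference, and each such $e=2^a m'\ge 4>2$, so the constraint $e>2$ is also automatic. The difference therefore equals the unconditional sum $\sum_{m'\mid m}\phi(2^a m')$.

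Finally I would conclude with the routine computation: since $\gcd(2^a,m')=1$ we have $\phi(2^a m')=2^{a-1}\phi(m')$, so the sum equals $2^{a-1}\sum_{m'\mid m}\phi(m')=2^{a-1}m=d/2$, using $\sum_{m'\mid m}\phi(m')=m$. I do not anticipate a genuine obstacle here; the only point needing care is the bookkeeping of which divisors survive the subtraction (precisely those carrying the full $2$-part of $d$) together with the verification that the balanced hypothesis holds for all of them, which is exactly what Subsection~\ref{ss:d/2+1} supplies.
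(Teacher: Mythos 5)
Your proof is correct and takes essentially the same route as the paper: the paper's argument consists exactly of the observation in Subsection~\ref{ss:d/2+1} that $d\mid 2(p^f-1)$ with odd quotient implies $p$ is balanced modulo $d/e'$ for every odd $e'\mid d$, followed by ``applying Theorem~\ref{thm:rank}.'' Your write-up simply makes explicit the divisor bookkeeping (the surviving divisors are those carrying the full $2$-part of $d$) and the computation $\sum_{m'\mid m}\phi(2^a m')=2^{a-1}m=d/2$, which the paper leaves to the reader.
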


We will study the arithmetic of $E$ over $\Fq(u)$ in
the case when $d=2(p^f-1)$ and $q\equiv1\pmod d$ in more
detail in Sections~\ref{s:points} through \ref{s:sha} below.

\subsection{}
In \cite{PomeranceUlmer13}, it is shown that the case where $4|d$ and
$d/2+1\in\langle p\rangle_d$ is more common than the
``supersingular'' case where $-1\in\langle p\rangle_d$.  More
precisely, for a fixed odd prime $p$, the number of integers $d<X$
satisfying $4|d$ and $d/2+1\in\langle p\rangle_d$ grows faster than
the number of integers $d<X$ with $-1\in\langle p\rangle_d$.

\subsection{}
Machine calculation shows that that there are many pairs $(p,d)$ such
that $p$ is balanced modulo $d$ but $-1\not\in\langle p\rangle_d$ and
$d/2+1\not\in\langle p\rangle_d$.  However, it is conjectured in
\cite{PomeranceUlmer13} and proven in \cite{EngbergThesis} that these
``sporadic'' cases are less common than the case $-1\in\langle
p\rangle_d$.  In other words, for a fixed odd prime $p$, the number of
integers $d<X$ such that $p$ is balanced modulo $d$, but $-1\not\in
\langle p\rangle_d$ and $d/2+1\not\in \langle p\rangle_d$ grows more
slowly than the number of $d<X$ such that $-1\in\langle p\rangle_d$.

\subsection{}
If $H=\langle p\rangle_d$ is balanced, then it has
even cardinality.  In other words, the order of $p$ in
$(\Z/d\Z)^\times$ must be even.

\subsection{}
If $d=\ell^a$ is an odd prime power, then $\langle p\rangle_d$ is
balanced if and only if $-1\in \langle p\rangle_e$.  Indeed, if $p$ is
balanced, then $\langle p\rangle_d$ has even cardinality, so contains
an element of order exactly 2.  But$ (\Z/\ell^a\Z)^\times$ is cyclic
and so contains a unique element of order exactly 2, namely $-1$.
Conversely, we noted above that $p$ is balanced if $-1\in\langle
p\rangle_d$.  The same argument applies when $e=2\ell^a$ is twice an
odd prime power.

\subsection{}
If $p$ is odd and does not divide $d$, then for all
sufficiently large $j$, $p$ is balanced modulo $2^jd$.  Indeed,
suppose that $p^g\equiv1\pmod d$.  If $j$ is large, then $p^g$ has
order $2^e$ modulo $2^j$ for some
$e>1$.  The elements of $(\Z/2^j\Z)^\times$ of order 2 are $-1$,
$2^{j-1}-1$, and $2^{j-1}+1$.  Of these, only $2^{j-1}+1$ is the
square of another element of $(\Z/2^j\Z)^\times$.  This implies that 
$$p^{g2^{e-1}}\equiv 2^{j-1}d+1\pmod{2^jd}$$
and so $p$ is balanced modulo $2^jd$.

Using Theorem~\ref{thm:rank}, we find that for any odd $p$, any power
$q$ of $p$, and any $d$ not divisible by $p$, the rank of $E(K)$ is
unbounded as $K$ runs through the ($2$-adic) tower of fields
$\Fq(t^{1/(2^jd)})$, $j=1,2,\dots$.

\subsection{}
When $d$ is odd, it makes sense to speak of $2$ being balanced
modulo $d$.  We will discuss arithmetic consequences of this case
in Section~\ref{s:p=2} below.

\subsection{}
Examining the first part of the proof of Prop.~\ref{prop:ExpJacobi},
the reader will note that our balanced condition is equivalent to a
certain condition on sums of fractional parts which arises from
considering valuations of Jacobi sums and Stickelberger's theorem.
Similar conditions arise in the study of Fermat varieties, and there
is a large literature on these conditions and the related notion of
``purity of Gauss sums.''  We mention only \cite{ShiodaKatsura79},
\cite{Aoki91}, and \cite{Ulmer07c}.


\section{Explicit points for $d=2(p^f-1)$}\label{s:points}
We observed above that when a power of $p$ is $-1\pmod d$ then
$\langle p\rangle_d$ is balanced.  In \cite[\S3]{Ulmer14a}, explicit
points are exhibited on $E$ when $d=p^f+1$ and $q\equiv1\pmod d$.
Indeed, if $\zeta_d$ denotes a fixed primitive $d$-th root of unity in
$\Fq$ and $u^d=t$, then we have points
$P_i=(\zeta_d^iu,\zeta_d^iu(\zeta_d^iu+1)^{d/2})$ for $i=0,\dots,d-1$
in $E(\Fq(u))$.  We write $V_d$ for the subgroup of $E(\Fq(u))$
generated by the $P_i$.  It is shown in \cite[4.3 and 5.3]{Ulmer14a}
that $V_d$ has finite index in $E(\Fq(t^{1/d}))$.  More generally when
$d$ divides $p^f+1$ for some $f$, the traces down to $E(\Fq(t^{1/d}))$
of the $P_i$ generate a finite index subgroup of the Mordell-Weil
group.

Our goal in this section is to do something similar for $d=2(p^f-1)$.
We work over $\Fq(u)$ where $u^d=t$.  Let
$$R(u)=\left(u^{-2},u^{-3}(u^2+1)^{(p^f+1)/2}\right).$$
A simple calculation shows that $R(u)$ is a rational point on $E$
defined over $\Fp(u)$.  

Next, fix a $d$-th root of unity $\zeta_d$ in $\Fpbar$.  We define
$R_i=R(\zeta_d^iu)$ for $i=0,\dots,d-1$.  Since $E$ is defined over
$\Fp(t)$, the $R_i$ are rational points on $E$ defined over $\Fq(u)$
where $\Fq=\Fp(\mu_d)=\F_{p^{2f}}$.

Note that $\zeta_d^{d/2}=-1$ and that $R_{i+d/2}=-R_i$, so the group
generated by the $R_i$ has rank at most $d/2$.  Because of this
relation we will consider $R_i$ only for $i=0,\dots,d/2-1$.  We write
$W_d$ for the subgroup of $E(\Fq(u))$ generated by $R_i$ for
$i=0,\dots,d/2-1$.

In the next section, we will prove the following results.

\begin{thm}\label{thm:RicardoRank}
  Suppose that $d=2(p^f-1)$, $u^d=t$, and $q\equiv1\pmod d$.  Let
  $W_d$ be the subgroup of $E(\Fq(u))$ generated by the points $R_i$
  defined above.  Then $W_d$ is free abelian of rank $d/2$.  The natural
  homomorphism
$$E(\Fq(u^2))\oplus W_d\to E(\Fq(u))$$
is injective, and its image has finite index. 
\end{thm}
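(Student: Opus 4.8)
The plan is to reduce the whole statement to one assertion, namely that the $d/2$ points $R_0,\dots,R_{d/2-1}$ are linearly independent in $E(\Fq(u))\otimes\Q$; everything else will then follow formally. The organizing tool is the Galois action. Since $q\equiv1\pmod d$ we have $\mu_d\subset\Fq$, so $\Fq(u)/\Fq(t)$ is cyclic of degree $d$; let $\sigma$ be the generator determined by $\sigma(u)=\zeta_d u$. Because $E$ is defined over $\Fp(t)$ and $R$ has coefficients in $\Fp$, we get $\sigma(R_i)=R_{i+1}$, so $W_d$ is $\sigma$-stable, and the relation $R_{i+d/2}=-R_i$ shows that $\sigma^{d/2}$ acts on all of $W_d$ as $-\id$ (exactly, not merely up to torsion). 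On the other hand $\Fq(u^2)=\Fq(t^{1/(d/2)})$ is the fixed field of $\sigma^{d/2}$, so $\sigma^{d/2}=+\id$ on $E(\Fq(u^2))$.

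Granting the independence of the $R_i$, the three conclusions fall out. First, any torsion element $\sum n_iR_i$ of $W_d$ maps to $0$ in $E(\Fq(u))\otimes\Q$, so independence forces all $n_i=0$; hence $W_d$ is torsion-free, and being generated by $d/2$ independent elements it is free abelian of rank $d/2$. Second, the kernel of $E(\Fq(u^2))\oplus W_d\to E(\Fq(u))$ is isomorphic to $W_d\cap E(\Fq(u^2))$; any $Q$ in this intersection satisfies $\sigma^{d/2}Q=Q$ and $\sigma^{d/2}Q=-Q$, so $2Q=0$, and torsion-freeness of $W_d$ gives $Q=0$, whence injectivity. Third, by Proposition~\ref{prop:d/2+1} (which applies since $d=2(p^f-1)$ has odd quotient $1$ and $q\equiv1\pmod d$) we have $\rk E(\Fq(u))=\rk E(\Fq(u^2))+d/2$; the image of the injective map has rank $\rk E(\Fq(u^2))+\rk W_d=\rk E(\Fq(u^2))+d/2$, which is the full rank, so the image has finite index.

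Thus the real content is the independence of $R_0,\dots,R_{d/2-1}$, which I would phrase representation-theoretically. After $\otimes\Q$, the ``new'' part $N\subset E(\Fq(u))\otimes\Q$ on which $\sigma^{d/2}$ acts by $-1$ is a module over $\Q[\sigma]/(\sigma^{d/2}+1)$, and Proposition~\ref{prop:d/2+1} shows $\dim_\Q N=d/2$; moreover the $d/2$ characters $\chi$ of $\langle\sigma\rangle$ with $\chi(\sigma)^{d/2}=-1$ are precisely those of order $e\mid d$ with $e\nmid d/2$, i.e.\ $e=d/e'$ for odd $e'\mid d$, and $p$ is balanced modulo every such $e$, so each of them occurs in $N$ with multiplicity one. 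The classes of the $R_i$ lie in $N$ and form the single $\langle\sigma\rangle$-orbit of $R_0$; since there are $d/2$ of them in a $d/2$-dimensional space, independence is equivalent to $R_0$ generating $N$ freely over $\Q[\sigma]/(\sigma^{d/2}+1)$, i.e.\ to the nonvanishing of every character projection $\sum_i\chi(\sigma)^{-i}R_i$. Equivalently, since the canonical height pairing is $\sigma$-invariant, the Gram matrix $(\langle R_i,R_j\rangle)$ is circulant, and independence amounts to its nonsingularity, i.e.\ to all of its eigenvalues $\sum_k\langle R_0,R_k\rangle\,\chi(\sigma)^k$ being nonzero.

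I expect this nonvanishing to be the main obstacle, as it is the one step requiring genuinely explicit input rather than the formal arguments above. Two routes seem viable. One is to compute the canonical heights $\langle R_0,R_k\rangle$ directly from the explicit model of $R(u)$, exploiting the pole structure of $R(\zeta_d^k u)$ and the fact that the $x$-coordinates $\zeta_d^{-2k}u^{-2}$ are pairwise distinct, and then to verify nonsingularity of the resulting circulant. The other is to choose a specialization of $t$ at which the specialized points remain independent and invoke a specialization theorem to lift independence back to $\Fq(u)$. I would pursue the height computation, since it dovetails with the index and Tate--Shafarevich bounds promised in Sections~\ref{s:points} through \ref{s:sha}, and I would flag the positivity and nonsingularity verification as the single nontrivial point of the whole argument.
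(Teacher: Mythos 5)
Your formal reduction is sound, and parts of it are a nice repackaging of what the paper does: the Galois argument that $\sigma^{d/2}$ acts as $-1$ on $W_d$ but as $+1$ on $E(\Fq(u^2))$, forcing the intersection to be $2$-torsion and hence zero, is a legitimate alternative to the paper's injectivity argument (which instead uses $\langle P,R_i\rangle=0$ and positivity to conclude that any $P$ in the intersection is torsion; amusingly, the paper proves that orthogonality by the very same $\sigma(u)=-u$ trick). The finite-index step via Proposition~\ref{prop:d/2+1} is also exactly the paper's. But the proposal has a genuine gap at precisely the point you flag: the independence of $R_0,\dots,R_{d/2-1}$ is never established. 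That independence is the entire content of the paper's Proposition~\ref{prop:heights}, proved by an explicit intersection-theoretic height computation on the N\'eron model $\EE_d\to\P^1$ in the style of \cite{Shioda90}: using the reduction types ($I_{2d}$ at $u=0$ and $u=\infty$, $I_2$ at the $d$-th roots of unity, from \cite[\S7]{Ulmer14a}), one finds the geometric part $-(R_i-O)(R_j-O)$ and the local correction factors, yielding $\langle R_i,R_j\rangle=p^f\delta_{ij}$ for $0\le i,j\le d/2-1$. The Gram matrix is not merely nonsingular but diagonal, and independence is immediate. Saying ``I would pursue the height computation and flag nonsingularity as the single nontrivial point'' identifies the crux correctly but does not carry it out; a priori the negacyclic Gram determinant could vanish, and nothing in your formal apparatus excludes that.

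Two smaller points. First, your fallback route via specialization is a dead end in this setting: specializing $t$ to an element of $\Fqbar$ lands the points on an elliptic curve over a finite field, where every rational point is torsion, so there is no useful specialization theorem to invoke. Second, your structural claims are slightly loose but repairable: the Gram matrix $(\langle R_i,R_j\rangle)$ is negacyclic rather than circulant (since $R_{k+d/2}=-R_k$), and the multiplicity-one statement for the characters with $\chi(\sigma)^{d/2}=-1$ requires applying Theorem~\ref{thm:rank} at every level $\Fq(t^{1/e})$, $e\mid d$, not just Proposition~\ref{prop:d/2+1} --- both fine, but neither substitutes for the missing height computation, which is where the theorem actually lives.
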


We will show that the index is independent of $q$ and give bounds on
it in Theorem~\ref{thm:index-bounds} below.

Note that if we trace the points $R_i$ down to level $d/2$ (i.e., to
$\Fq(u^2)$), we get zero.  On the other hand, it follows from the
height calculation in the next subsection that if $e$ is a odd divisor
of $p^f+1$ and if we trace from level $d=2(p^f+1)$ to level $d/e$
(i.e., to $\Fq(u^{e})$), the resulting points generate a group of rank
$d/(2e)$.

\section{Heights for $d=2(p^f-1)$}\label{s:heights}
In this section we compute the heights of the points $R_i$ and use
the result to prove Theorem~\ref{thm:RicardoRank}.  As in
\cite[\S8]{Ulmer14a}, we write $\langle\cdot,\cdot\rangle$ for the
canonical height pairing without the factor $\log q$.

\begin{prop}\label{prop:heights}  
  Let $R_i$ be the points exhibited in Section~\ref{s:points}.  For $0\le i,j\le
  d/2-1$, the height pairing has values
$$\langle R_i,R_j\rangle
=\begin{cases}
p^f&\text{if $i=j$}\\
0&\text{if $i\neq j$}.
\end{cases}
$$
If $P\in E(\Fq(u^2))$, then $\langle P,R_i\rangle=0$ for all $i$.
\end{prop}

\begin{proof}
  The general strategy for calculating heights is discussed in
  \cite{Shioda90}.  The case of the Legendre curve and the
  construction of the N\'eron model needed to carry out the
  calculation are discussed in detail in \cite[\S7 and
  \S8]{Ulmer14a}.  Since the case $d=p^f+1$ treated there is
  similar to the case $d=2(p^f-1)$ discussed here, we will not give
  many details.

  Using the Galois invariance of the height pairing and the equality
  $R_{i+d/2}=-R_i$, we may assume that $j=0$.  As in the proof of
  \cite[Thm.~8.2]{Ulmer14a}, we conflate the points $R_i$ and $O$ with the
  corresponding sections of $\pi$.

  We write $\pi:\EE_d\to\P^1$ for the N\'eron model of $E/\Fq(u)$.  We
  recall from \cite[\S7]{Ulmer14a} that $E$ has reduction type
  $I_{2d}$ at $u=0$ and type $I_2$ at the places dividing $u^d-1$.
  Since $d$ is even, the reduction type is $I_{2d}$ at $u=\infty$.
  At all other places, $E$ has good reduction. 

  By \cite[Lemma~7.1]{Ulmer14a}, the height (or degree) of
  $\EE_d\to\P^1$ is $d/2=p^f-1$.  Therefore, the self-intersections
  $O^2$ and $R_i^2$ are $-d/2=-(p^f-1)$.  The intersection number
  $R_0.O$ is equal to 1.  (There is a simple intersection over $u=0$.)
  Therefore, the ``geometric'' part of the height pairing $\langle
  R_0,R_0\rangle$ is
$$-(R_0-O)(R_0-O)=d+2=2p^f.$$

For the ``correction factors,'' one checks that $R_0$ meets the
identity component at $u=0$, the non-identity component at
$u=\zeta_d^j$ where $j=\pm d/4$, the identity component at
$u=\zeta_d^j$ for other values of $j$, and the component labelled $d$
at $u=\infty$. Thus the correction factor is
$$-2\left(\frac{1\cdot1}{2}\right)-\left(\frac{d\cdot d}{2d}\right)=-1-d/2$$
and the height $\langle R_0,R_0\rangle=d/2+1=p^f$.

For $0<i\le d/2-1$, the intersection number $R_i.R_0$ is $2$. (There
are simple intersections over $u=0$ and $u=\infty$.) Thus the
``geometric'' part of the height pairing $\langle R_i,R_0\rangle$ is
$$-(R_i-O)(R_0-O)=-2+2+d/2=d/2.$$

The only place where both $R_i$ and $R_0$ meet a non-identity
component is at $u=\infty$, where they both meet the component labeled
$d$.  The local contribution is thus $-d/2$, and the height pairing is
zero.

This verifies the formula for the heights $\langle R_i,R_j\rangle$.

If $P\in E(\Fq(u^2))$, then $P$ is fixed by $\gal(\Fq(u)/\Fq(u^2))$.
But the automorphism $\sigma$ with $\sigma(u)=-u$ satisfies
$\sigma(R_i)=-R_i$ for all $i$.  Thus 
$$\langle P,R_i\rangle=\langle\sigma(P),\sigma(R_i)\rangle
=-\langle P,R_i\rangle$$ and it follows that $\langle
P,R_i\rangle=0$ for all $i$ as desired.

This completes the proof of the proposition.
\end{proof}

\begin{proof}[Proof of Theorem~\ref{thm:RicardoRank}]
  It follows immediately from Proposition~\ref{prop:heights} that the
  points $R_i$ for $i=0,\dots,d/2-1$ are independent, in other words
  that $W_d$ is free abelian of rank $d/2$.

  If $P$ is a point in the intersection of $W_d$ and $E(\Fq(u))$ then
  by the Proposition we have $\langle P,P\rangle=0$, so that $P$ is
  torsion.  But we just saw that $W_d$ is torsion-free, so $P=0$.
  This proves that $ E(\Fq(u^2))\oplus W_d\to E(\Fq(u))$ is injective.

  By Proposition~\ref{prop:d/2+1}, the rank of $E(\Fq(u))$ is $d/2$ plus the
  rank of $E(\Fq(u^2))$, so the index of $E(\Fq(u^2))\oplus W_d$ in
  $E(\Fq(u))$ is finite.

  This completes the proof of Theorem~\ref{thm:RicardoRank}.
\end{proof}

\section{Bounds on the index for $d=2(p^f-1)$}\label{s:index}
Throughout this section we assume that $d=2(p^f-1)$,
$u^d=t$, and $q\equiv1\pmod d$.  Since $p$ is odd, this implies that
$q\equiv1\pmod4$.  

We begin by recalling several points from \cite[\S3]{Ulmer14a}:
We have 2-torsion points $Q_0=(0,0)$, $Q_1=(-1,0)$, and $Q_t=(-t,0)$,
as well as the 4-torsion points $P^{(2)}_i$ with $i\in\Z/2\Z$ defined
by $P_0^{(2)}=(t^{1/2},t^{1/2}(t^{1/2}+1))$ and
  $P _1 ^{(2)}=(-t^{1/2},-t^{1/2}(-t^{1/2}+1))$. 

Next we recall some results on $\Sel_2(E/\Fq(u))$, the 2-Selmer
group of $E$ over $\Fq(u)$.  This group is defined in
\cite[\S5]{Ulmer14a}, and it is shown there that when $d$ is even
there is an injection
$$\Sel_2(E/\Fq(u))\into(\Z/2\Z)^d$$
and the composed map
$$E(\Fq(u))/2E(\Fq(u))\into\Sel_2(E/\Fq(u))\into(\Z/2\Z)^d$$
sends a point $(x,y)\not\in\{O,Q_1\}$ to the tuple
$(e_0,\dots,e_{d-1})$ where
$$e_j=\ord_{u=\zeta^j}(x+1)\pmod2.$$

It follows immediately from this that if $q'$ is a power of $q$, then
the natural map of 2-Selmer groups
$$\Sel_2(E/\Fq(u))\to\Sel_2(E/\F_{q'}(u))$$
is injective.

There is also an explicit calculation of $\Sel_2(E/\Fq(u))$ in
\cite[\S5]{Ulmer14a}.  In terms of the injection
$\Sel_2(E/\Fq(u))\into(\Z/2\Z)^d$, we have that
$\Sel_2(E/\Fq(u))\cong(\Z/2\Z)^d$ if $q\equiv1\pmod{2d}$, whereas if
$(q-1)/d$ is odd, $\Sel_2(E/\Fq(u))$ consists of those tuples
$(e_0,\dots,e_{d-1})$ satisfying
$$\sum_{i=0}^{d/2-1}e_{2i}=\sum_{i=0}^{d/2-1}e_{2i+1}=0.$$

Recall that $W_d$ is the subgroup of $E(\Fq(u))$ generated by the
points $R_i$, $i=0,\dots,d/2-1$ defined in Section~\ref{s:points}.
Now we turn to the main topic of this section, namely the index of
$E(\Fq(u^2))\oplus W_d$ in $E(\Fq(u))$.

\begin{thm}\label{thm:index-bounds}
  Suppose that $d=2(p^f-1)$, $u^d=t$, and $q\equiv1\pmod d$.  Let $I$
  be the index of $E(\Fq(u^2))\oplus W_d$ in $E(\Fq(u))$.  Then for a
  fixed $p$ and $f$, $I$ is independent of $q$ and is a power of $2$
  times a power of $p$.  The $p$ part of $I$ divides $p^{f(p^f-1)/2}$,
  and $I$ is divisible by 4.
\end{thm}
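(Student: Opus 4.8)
The plan is to treat the three quantitative assertions about $I$ separately after one common reduction. Let $\sigma$ be the nontrivial automorphism of $\Fq(u)$ over $\Fq(u^2)$, so $\sigma(u)=-u$; then $E(\Fq(u^2))=E(\Fq(u))^\sigma$, and $\sigma(R_i)=-R_i$ shows that $W_d$ lies in the $(-1)$-eigenspace. Write $N$ for the quotient of that eigenspace by torsion. By Proposition~\ref{prop:heights}, $W_d$ is orthogonal to $E(\Fq(u^2))$, has finite index in $N$, and has Gram matrix $p^f\cdot\id_{d/2}$, so $\reg(W_d)=(p^f)^{d/2}=p^{f(p^f-1)}$. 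Since the eigenspace splitting is integral away from $2$, the prime-to-$2$ part of $I$ equals that of $[N:W_d]$, and orthogonality gives the regulator identity $p^{f(p^f-1)}=[N:W_d]^2\,\reg(N)$.

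For the bound on the $p$-part I would note that every fiber of the N\'eron model of Section~\ref{s:heights} has type $I_2$ or $I_{2d}$, so each component group has order dividing $2d=4(p^f-1)$, which is prime to $p$. Hence all local correction terms in the height pairing are $p$-integral, $\reg(N)\in\Z_{(p)}$, and $v_p(\reg(N))\ge0$. The regulator identity then forces $v_p([N:W_d]^2)\le f(p^f-1)$, i.e. $v_p(I)=v_p([N:W_d])\le f(p^f-1)/2$, which is the asserted bound $p^{f(p^f-1)/2}$ on the $p$-part.

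For the shape of $I$ I would divide the Birch--Swinnerton-Dyer formulas (valid by \cite[Cor.~11.3]{Ulmer14a}) at levels $\Fq(u)$ and $\Fq(u^2)$. Orthogonality gives $I^2=p^{f(p^f-1)}\,\reg(E(\Fq(u^2)))/\reg(E(\Fq(u)))$, and BSD rewrites the regulator ratio in terms of the ratios of leading $L$-coefficients, of $|\sha|$, of Tamagawa products, of $|E(\cdot)_{\mathrm{tor}}|^2$, and of $q$-power normalizations. By Theorem~\ref{thm:L} and Proposition~\ref{prop:ExpJacobi} the quotient $L(E/\Fq(u),T)/L(E/\Fq(u^2),T)$ equals $(1-qT)^{d/2}$, whose leading term at $T=q^{-1}$ is $(-q)^{d/2}$, a power of $p$; the $\sha$-ratio is a power of $p$ since $\sha$ is a $p$-group (Section~\ref{s:sha}); the torsion is a $2$-group; and the Tamagawa ratio is $2^{d/2+2}$, because the component-group orders at the two levels ($2d,2d,2^d$ versus $d,d,2^{d/2}$) have matching odd parts that cancel, leaving a pure power of $2$. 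The remaining normalizations are powers of $q$. Thus $I^2$, hence $I$, is a power of $2$ times a power of $p$; here the Tamagawa-versus-regulator cancellation of the odd primes dividing $p^f-1$ is the delicate bookkeeping step, since the heights alone allow such primes into the denominator of $\reg(N)$.

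For divisibility by $4$ I would use the explicit $2$-descent recalled in Section~\ref{s:index}. Since $x(R_i)=\zeta_d^{-2i}u^{-2}$, we have $x(R_i)+1=u^{-2}(u^2+\zeta_d^{-2i})$, with simple zeros at $u=\zeta_d^{d/4-i}$ and $u=\zeta_d^{3d/4-i}=\zeta_d^{(d/4-i)+d/2}$ and an even-order pole at $u=0$, so the descent image of $R_i$ is $e_{d/4-i}+e_{3d/4-i}$. As $i$ runs over $0,\dots,d/2-1$ the index $d/4-i$ runs through one representative of each pair $\{k,k+d/2\}$, so these $d/2$ vectors span the symmetric subspace $S=\{(e_j):e_j=e_{j+d/2}\}$ of dimension $d/2$; and points of $E(\Fq(u^2))$ map into $S$ because their $x$-coordinate is a function of $u^2$. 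Hence the image of $E(\Fq(u^2))\oplus W_d$ in $(\Z/2\Z)^d$ is exactly $S$, of dimension $d/2$, whereas the Selmer embedding is injective and $E(\Fq(u))$ has full $2$-torsion, so the image of $E(\Fq(u))$ has dimension $\rk E(\Fq(u))+2=d/2+\rk E(\Fq(u^2))+2$. The $2$-rank of the cokernel is therefore at least $\rk E(\Fq(u^2))+2\ge2$, giving $4\mid I$. Finally, independence of $q$ follows because $J_o^2=q^{|o|}$ holds on the nose (Proposition~\ref{prop:ExpJacobi}), so Frobenius acts as the scalar $q$ on the relevant cohomology; the Mordell--Weil lattice modulo torsion and $\sha$ are then already defined over $\Fq$ and are unchanged under $\Fq\to\F_{q'}$ with $q'\equiv1\pmod d$, while the stated injectivity of $2$-Selmer groups under constant extension pins the $2$-part. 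I expect this last point---ruling out growth of the Mordell--Weil group and of $\sha$ under constant field extension---to be the main obstacle, together with the odd-prime cancellation in the third paragraph.
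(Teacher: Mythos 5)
The decisive gap is in your third paragraph, the step that is supposed to show $I$ has no prime factors other than $2$ and $p$. Your BSD-ratio computation reduces this to knowing that $|\sha(E/\Fq(u))|/|\sha(E/\Fq(u^2))|$ involves only the primes $2$ and $p$, and you justify that by asserting that ``$\sha$ is a $p$-group (Section~\ref{s:sha}).'' No such statement is available: for $d=2(p^f-1)$ neither $\sha(E/\Fq(u))$ nor $\sha(E/\Fq(u^2))$ is known to be a $p$-group (the $p$-group result of \cite[Cor.~10.2]{Ulmer14a} is for $d=p^f+1$, and the paper stresses that essentially nothing is known about $E$ over $\Fq(u^2)$ in the present case). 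Worse, the logic runs backwards: the statements of Section~\ref{s:sha} that do control $\sha$ --- that $\Ker\NN$ is a $p$-group and that the $\sha$-ratio is a power of $2$ times a power of $p$ --- are in the paper deduced \emph{from} Theorem~\ref{thm:index-bounds} via exactly this BSD ratio, so citing them here is circular. You correctly identify the danger (odd primes dividing $p^f-1$ can enter the denominator of $\reg(N)$ through the local correction terms, since the component groups have orders $2d$ and $2$), but your proposed fix does not close it. The paper's actual tool is a relative, minus-part version of the integrality theorem \cite[Prop.~9.1]{Ulmer14a}: with $N^-$ the minus part of the lattice spanned by non-identity fibral components, the quantity $\disc(E(\Fq(u))^-)\disc(N^-)/|E(\Fq(u))^-_{tor}|^2$ lies in $\Z[1/2]$. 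Since $\sigma$ fixes the fibers over $0,\infty$ and swaps the fibers over $\mu_d$ in pairs, $N^-$ has orthogonal generators of self-pairing $-4$, so $\disc(N^-)=(-4)^{d/2}$ is a unit in $\Z[1/2]$, and the minus torsion is trivial; hence $\disc(E(\Fq(u))^-)\in\Z[1/2]$, and the identity $J^2=p^{f(p^f-1)}/\disc(E(\Fq(u))^-)$ forces the odd part $J$ of $I$ to be a power of $p$ dividing $p^{f(p^f-1)/2}$. Without this stronger integrality input (or a substitute), odd primes dividing $p^f-1$ are simply not excluded from $I$.

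Your independence-of-$q$ step is also incomplete, as you yourself flag: Frobenius acting as the scalar $q$ on cohomology gives that the rank does not grow, but it does not rule out a point $P\in E(\F_{q'}(u))$ with $nP\in E(\Fq(u))$ and $P\notin E(\Fq(u))$, which is exactly what would make $I$ grow. The paper handles this by observing that such a $P$ makes $\Fr_q(P)-P$ a nonzero $n$-torsion point, reducing to $n=2$ by \cite[Prop.~6.1]{Ulmer14a}, and then invoking the injectivity of $\Sel_2(E/\Fq(u))\to\Sel_2(E/\F_{q'}(u))$ to see the $2$-part of the index cannot increase. By contrast, two of your steps are sound: the $p$-part bound in your second paragraph ($p$-integrality of the local height corrections plus $p^{f(p^f-1)}=[N:W_d]^2\reg(N)$) is correct and even a bit lighter than the paper's route, and your fourth paragraph proving $4\mid I$ --- comparing the image of $E(\Fq(u^2))\oplus W_d$ in $(\Z/2\Z)^d$, which is the symmetric subspace of dimension $d/2$, with the image of $E(\Fq(u))$, of dimension $d/2+\rk E(\Fq(u^2))+2$ --- is a correct, clean variant of the paper's argument, which instead exhibits the explicit combinations $\sum_{i} R_{2i}+P^{(2)}_{1+d/4}$ and $\sum_{i} R_{2i+1}+P^{(2)}_{d/4}$ as divisible by $2$ in $E(\Fq(u))$ but not in the subgroup.
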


\begin{proof}
  Note that the index $I$ can only increase with $q$.  We argue that
  it does not increase.  Since the rank of $E(\Fq(u))$ is independent
  of $q$ (for $q$ satisfying our hypotheses), if $I$ increased going
  from $\Fq(u)$ to $\F_{q'}(u)$, there would be a point $P\in
  E(\F_{q'}(u))\setminus E(\Fq(u))$ with $nP\in E(\Fq(u))$.  In this
  case $\Fr_q(P)-P$ would be $n$-torsion.  By
  \cite[Prop.~6.1]{Ulmer14a} we may assume $n=2$.  But the
  injectivity of $\Sel_2(E/\Fq(u))\to\Sel_2(\F_{q'}(u))$ noted above
  implies that a basis of $E(\Fq(u))/2E(\Fq(u))$ is also a basis of
  $E(\F_{q'}(u))/2E(\F_{q'}(u))$, and this implies that the 2 part of
  the index cannot increase.  This proves that $I$ is independent of
  $q$.

  We can also use Selmer groups to see that the index $I$ is divisible
  by 4 as follows.  An easy calculation shows that the image of $W_d$
  in $\Sel_2(\Fq(u))$ has dimension $d/2$ and it contains the image of
  the 4-torsion points $P_0^{(2)}$ and $P _1 ^{(2)}$.  More
  explicitly, we find that
$$\sum_{i=0}^{d/4-1}R_{2i}+P_{1+d/4}^{(2)}\qquad\text{and}\qquad
\sum_{i=0}^{d/4-1}R_{2i+1}+P_{d/4}^{(2)}$$ 
are divisible by 2 in $E(\Fq(u))$.  On the other hand, they are are
not divisible by 2 in $E(\Fq(u))\oplus W_d$ since
Theorem~\ref{thm:RicardoRank} shows the sums $\sum R_{2i}$ and $\sum
R_{2i+1}$ are not divisible by 2 in $W_d$.

To further control the index of $E(\Fq(u^2))\oplus W_d$ in
$E(\Fq(u))$, we use a relative version of the integrality result
\cite[Prop~9.1]{Ulmer14a}.  To state the result, we first
introduce some notation.  If $M$ is a finitely generated $\Z$-module,
we write $M[1/2]$ for $M\tensor\Z[1/2]$.  If $M$ is also equipped with
an automorphism of order 2, we have a direct sum decomposition
$$M[1/2]=M^+\oplus M^-$$ 
where $M^+$ denotes the fixed subgroup of $M[1/2]$ and $M^-$
denotes the subgroup of $M[1/2]$ where the automorphism acts as
$-1$.  If $M$ is equipped with a symmetric, bilinear, $\Q$-valued
pairing, we may canonically extend the pairing to $M[1/2]$.  If $M$
(resp. $M'$) is a finitely generated $\Z$-module (resp.~a finitely
generated $\Z[1/2]$-module) equipped with a symmetric, bilinear,
$\Q$-valued pairing, we define $\disc(M)$ and $\disc(M')$ in the usual
way: choose a basis of $M$ modulo torsion (resp.~a basis of $M'$
modulo $\Z[1/2]$-torsion) and let $\disc$ be the absolute value of the
determinant of the matrix of pairings of basis elements.  Then
$\disc(M)$ is a well-defined element of $\Q$, and $\disc(M')$ is well
defined up to multiplication by the square of an element of
$\Z[1/2]^\times$, i.e., up to a power of $4$.

Now returning to elliptic curves, we let $\pi:\EE\to\P^1$ be the
N\'eron model of $E/\Fq(u)$, and we let $N$ be the subgroup of the
N\'eron-Severi group of $\EE$ generated by components of fibers of
$\pi$ that do not meet the zero section.  This is known to be a
finitely generated free abelian group.  With these notations,
\cite[Prop.~9.1]{Ulmer14a} says that the rational number
$$\frac{\disc(E(\Fq(u)))\disc(N)}{|E(\Fq(u))_{tor}|^2}$$
is in fact an integer.  If we take the proof of this result, tensor
all the groups appearing in it with $\Z[1/2]$, and then take the minus
part for $\gal(\Fq(u)/\Fq(u^2))=\{1,\sigma\}$, we find that the
rational number
$$\frac{\disc(E(\Fq(u))^-)\disc(N^-)}{|E(\Fq(u))^-_{tor}|^2}$$
(well-defined up to squares of elements of $\Z[1/2]^\times$) is in
fact an element of $\Z[1/2]$.

Since $E(\Fq(u))_{tor}$ is a 2-group, $E(\Fq(u))^-_{tor}$ is trivial.  Using
the N\'eron model calculations of \cite[\S7]{Ulmer14a}, we find
that all components of the fibers of $\pi$ over $0$ and $\infty$ are
fixed by $\sigma$ and so do not contribute to $N^-$.  The fibers over
the $d$-th roots of unity are permuted in pairs, so $N^-$ is a free
$\Z[1/2]$-module of rank $d/2$, and it has an orthogonal set of
generators each of which has self-pairing $-4$.  Thus
$\disc(N^-)=(-4)^{d/2}=2^d$ which is a unit in $\Z[1/2]$.  Our
integrality result then yields that $\disc(E(\Fq(u))^-)$ lies in
$\Z[1/2]$.

Now the inclusion $E(\Fq(u^2))\oplus W_d\into E(\Fq(u))$ yields an
identification
$$\left(\frac{E(\Fq(u))}{E(\Fq(u^2))\oplus W_d}\right)[1/2]\cong
\frac{E(\Fq(u))^-}{W_d[1/2]}.$$
If $J$ denotes the order of this group (which is the prime-to-2 part
of the index of $E(\Fq(u^2))\oplus W_d$ in $E(\Fq(u))$), then we have
$$\disc(E(\Fq(u))^-)=\frac{\disc(W_d[1/2])}{J^2}.$$
Proposition~\ref{prop:heights} shows that $\disc(W_d[1/2])$ is
$p^{f(p^f-1)}$, so our integrality result shows that $J$ is a power of
$p$.  Therefore, the index of $E(\Fq(u^2))\oplus W_d$ in $E(\Fq(u))$
is a power of $2$ times a power of $p$.  Moreover, the power of $p$
divides $p^{f(p^f-1)/2}$.
\end{proof}

It follows from Theorem~\ref{thm:sha-bounds} below that the 2 part of
$I$ divides $2^{p^f+1}$.  We suspect that the 2 part of $I$ is in fact
always 4.  We can prove this when $p^f\equiv3\pmod4$ by comparing the
trace down to level $d=4$ of points $R_i$ of this paper and the points
$P_i$ of \cite[\S3]{Ulmer14a}.  It is also true in several other
examples we have checked, but we do not know how to prove it in
general.

\section{Bounds on $\sha$ for $d=2(p^f-1)$}\label{s:sha}
Recall that the second part of the BSD conjecture relates the leading
coefficient of the $L$-series of $E$ at $s=1$ to other invariants of
$E$.  More precisely, defining $L^*(E):=\frac1{r!}L^{(r)}(E,1)$, we
should have
$$L^*(E)=\frac{|\sha|\,R\,\tau}{|E_{tor}|^2}$$
where $r$ is the order of vanishing of the $L$-function, $\sha$ is the
Tate-Shafarevich group of $E$, $R$ is a regulator, and $\tau$ is a
Tamagawa number.  See \cite[\S6]{UlmerCRM} for definitions and a
precise statement.  The conjecture holds in our context by
\cite[Cor.~11.3]{Ulmer14a}.

Our goal in this section is to exploit the BSD formula to obtain
information on $\sha(E/\Fq(u))$ when $d=2(p^f-1)$.  In
\cite[Cor.~10.2]{Ulmer14a} it is proven that when $d=p^f+1$,
$\sha(E/\Fq(u))$ is a $p$-group of order equal to the square of an
index.  The analogue for $d=2(p^f-1)$ is necessarily more complicated
because we have very little information in general on the arithmetic
of $E$ over $\Fq(u^2)$.  Nevertheless, we can prove a strong relative
statement.

To state the theorem, we write 
$$\NN:\sha(E/\Fq(u))\to\sha(E/\Fq(u^2))$$
for the norm map induced by the corestriction $H^1(\spec\Fq(u),E)\to
H^1(\spec\Fq(u^2),E)$. 

\begin{thm}\label{thm:sha-bounds}
Suppose that $d=2(p^f-1)$, $u^d=t$, and $q\equiv1\pmod d$.  Let $I$ be
the index of $E(\Fq(u^2))\oplus W_d$ in $E(\Fq(u))$, as in
Theorem~\ref{thm:index-bounds}. 
\begin{enumerate}
\item We have
$$I^2=2^{p^f+1}\frac{p^{fd/2}}{q^{d/4}}
\frac{|\sha(E/\Fq(u))|}{|\sha(E/\Fq(u^2))|}
=2^{p^f+1}\frac{p^{fd/2}}{q^{d/4}}
\frac{|\Ker\NN\,|}{|\coker\NN\,|}.$$ 
In particular, when $q=p^{2f}$, we have
$$I^2=2^{p^f+1}\frac{|\sha(E/\Fq(u))|}{|\sha(E/\Fq(u^2))|}
=2^{p^f+1}\frac{|\Ker\NN\,|}{|\coker\NN\,|}.$$
\item The cokernel of $\NN$ is a 2-group.  
\item The kernel of $\NN$ is a $p$-group.
\item For fixed $p$ and $f$, the cokernel of $\NN$ is independent of
  $q$.  The cokernel of $\NN$ is non-trivial if
  $p^f\equiv1\pmod4$.  The order of $\Ker\NN$ goes to infinity with
  $\log_p q$.
\end{enumerate}
\end{thm}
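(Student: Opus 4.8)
The plan is to prove Theorem~\ref{thm:sha-bounds} by exploiting the BSD formula of \cite[Cor.~11.3]{Ulmer14a} at both levels $\Fq(u)$ and $\Fq(u^2)$ and taking the ratio. The key principle is that the leading $L$-coefficient $L^*(E/K)$ factors through the regulator, the order of $\sha$, the Tamagawa number, and the torsion; since we have computed the rank and (in Section~\ref{s:L-function}) the $L$-function explicitly in terms of Jacobi sums, and since Proposition~\ref{prop:heights} gives the height pairing on $W_d$ exactly, most of these quantities are known or controlled. The role of the index $I$ is precisely to relate $\disc(E(\Fq(u)))$ to the known discriminant $\disc(W_d)\cdot\disc(E(\Fq(u^2)))$ up to the factor $I^2$, which is how $I^2$ enters the formula in part (1).

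First I would write down the BSD formula at level $d$ and at level $d/2$ separately, then divide. The regulator ratio is the main explicit input: using the orthogonal decomposition $E(\Fq(u))[1/2]=E(\Fq(u))^+\oplus E(\Fq(u))^-$ together with the fact that $E(\Fq(u^2))$ is the $+$-part and $W_d$ spans the $-$-part (both from Proposition~\ref{prop:heights}), the regulators of the two levels differ by $\disc(W_d)/I^2$, and Proposition~\ref{prop:heights} gives $\disc(W_d)=p^{f\cdot d/2}$ up to the power-of-$2$ ambiguity already isolated in Theorem~\ref{thm:index-bounds}. The explicit $L$-function computation supplies the ratio of leading coefficients $L^*(E/\Fq(u))/L^*(E/\Fq(u^2))$, which is a ratio of products of $(1-J_o^2 q^{-1})$-type factors; by Proposition~\ref{prop:ExpJacobi} the relevant Jacobi sums satisfy $J_o^2=q^{|o|}$, and the ``$2$-new'' orbits (those not already present at level $d/2$) contribute the factor $p^{fd/2}/q^{d/4}$ together with the constant $2^{p^f+1}$. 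The Tamagawa and torsion factors must be compared using the N\'eron model data of \cite[\S7]{Ulmer14a} recalled in the proof of Proposition~\ref{prop:heights}; these account for the remaining power of $2$. Assembling all of this and identifying $|\sha(E/\Fq(u))|/|\sha(E/\Fq(u^2))|$ with $|\Ker\NN|/|\coker\NN|$ (via the standard fact that $\N$ and restriction compose to multiplication by the degree $2$, so the kernel/cokernel discrepancy measures the ratio of orders) gives part (1).

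For parts (2)--(4), the plan is to separate the $2$-primary and $p$-primary behavior. The cokernel of $\N$ being a $2$-group (part 2) I would deduce from the injectivity of the $2$-Selmer map under extension of $\Fq$, recalled in Section~\ref{s:index}: away from $2$, the restriction-corestriction formalism forces surjectivity, so only the $2$-part can survive. The kernel being a $p$-group (part 3) follows dually, since $\sha$ has no $\ell$-part for $\ell\neq p$ in this context (it is a consequence of the Tate conjecture / the results of \cite{Ulmer14a} that $\sha$ is a $p$-group at each level), so $\Ker\N$ can only be $p$-primary. For part (4), independence of $\coker\N$ from $q$ comes from the injectivity of the $2$-Selmer groups under $\Fq\subset\F_{q'}$ (the $2$-part, hence $\coker\N$, is already saturated at the base field); non-triviality of $\coker\N$ when $p^f\equiv1\pmod4$ follows from comparing the explicit image of $W_d$ in $\Sel_2$ (the divisibility relations exhibited in the proof of Theorem~\ref{thm:index-bounds}) with the description of $\Sel_2(E/\Fq(u))$ in that section; and $|\Ker\N|\to\infty$ with $\log_p q$ follows from part (1) by letting $q$ grow: the left side $I^2$ and $\coker\N$ are bounded independently of $q$, while the factor $p^{fd/2}/q^{d/4}$ (with $q$ a growing power of $p$) forces $|\Ker\N|=|\sha(E/\Fq(u))|/|\sha(E/\Fq(u^2))|\cdot|\coker\N|$ to grow.

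**The main obstacle** I expect is the careful bookkeeping of the power-of-$2$ factors and the Tamagawa/torsion contributions in the BSD ratio: the discriminant of the $\Z[1/2]$-module $W_d[1/2]$ is only well-defined up to a power of $4$, so one must track the $2$-adic valuation of every term independently (using the Selmer-group input of Section~\ref{s:index} rather than the discriminant formalism) to pin down the exact constant $2^{p^f+1}$ rather than just its odd part. The $p$-adic and $\ell$-adic parts, by contrast, are essentially forced by the explicit Jacobi-sum evaluation and Proposition~\ref{prop:heights}.
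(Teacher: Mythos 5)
Your overall plan for part (1) --- taking the ratio of the two BSD formulas --- is the paper's approach, but even there the bookkeeping you flag as ``the main obstacle'' is already off: since the ratio of $L$-functions is $(1-q^{1-s})^{d/2}$, the ratio of leading coefficients $L^*(E/\Fq(u))/L^*(E/\Fq(u^2))$ equals $1$, not $2^{p^f+1}p^{fd/2}/q^{d/4}$. In the paper, $p^{fd/2}$ enters through the regulator ratio $R(E/\Fq(u))/R(E/\Fq(u^2))=\disc(W_d)/I^2=p^{fd/2}/I^2$ (which you also write down, so you are counting $p^{fd/2}$ twice), while $2^{p^f+1}q^{-d/4}$ enters through the Tamagawa ratio $\tau(E/\Fq(u))/\tau(E/\Fq(u^2))$, computed from the N\'eron model data. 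Also, the identification $|\sha(E/\Fq(u))|/|\sha(E/\Fq(u^2))|=|\Ker\NN\,|/|\coker\NN\,|$ needs no restriction--corestriction argument; it is the elementary order formula valid for any homomorphism of finite groups.

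The genuine gap is part (3). You assert that $\sha$ is a $p$-group at each level as a known consequence of the Tate conjecture and \cite{Ulmer14a}, and deduce that $\Ker\NN$ is $p$-primary. That assertion is false in this setting: \cite[Cor.~10.2]{Ulmer14a} proves $\sha$ is a $p$-group when $d$ divides $p^f+1$, and the paper explicitly warns that no such statement is available for $d=2(p^f-1)$, since very little is known about $E$ over $\Fq(u^2)$ (level $p^f-1$). Worse, your premise contradicts the theorem you are proving: by parts (2) and (4), $\coker\NN$ is a non-trivial $2$-group when $p^f\equiv1\pmod4$, so $\sha(E/\Fq(u^2))$ then has non-trivial $2$-primary part; if both $\sha$'s were $p$-groups, $\coker\NN$ would be simultaneously a $2$-group and a $p$-group, hence trivial. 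What the paper actually does --- and what is missing from your proposal --- is a comparison of the $2$-descent exact sequences at levels $d$ and $d/2$ via the norm maps, using the explicit Selmer-group computations recalled in Section~\ref{s:index} and treating the cases $(q-1)/d$ even and odd separately; this shows that $\NN$ is injective on $2$-primary parts of $\sha$ (so the $2$-part of $\Ker\NN$ is trivial) and, in the odd case, that the induced map on $2$-primary parts is not surjective, which is exactly the non-triviality claim of part (4). Combined with part (1), which forces the prime-to-$2p$ parts of the two $\sha$'s to be isomorphic under $\NN$, this is what makes $\Ker\NN$ a $p$-group. Your argument for the $q$-independence of $\coker\NN$ also needs repair: the paper deduces it from parts (1)--(3) together with the $q$-independence of $I$ (Theorem~\ref{thm:index-bounds}), not from Selmer saturation alone.
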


\begin{proof}
  By Theorem~\ref{thm:L}, Proposition~\ref{prop:ExpJacobi}, and
  Subsection~\ref{ss:d/2+1}, we have
$$\frac{L(E/\Fq(u),s)}{L(E/\Fq(u^2),s)}=(1-q^{1-s})^{d/2}$$
and so $L^*(E/\Fq(u))=L^*(E/\Fq(u^2))$.  Taking the ratio of the
two BSD formulas, we find
$$1=\frac{R(E/\Fq(u))}{R(E/\Fq(u^2))}\frac{|\sha(E/\Fq(u))|}{|\sha(E/\Fq(u^2))|}
\frac{\tau(E/\Fq(u))}{\tau(E/\Fq(u^2))}.$$

Using the notation of Section~\ref{s:index}
and Theorem~\ref{thm:RicardoRank}, we have
$$R(E/\Fq(u))=\disc(E(\Fq(u)))=
\disc\left(E(\Fq(u^2))\oplus W_d\right)/I^2.$$
On the other hand, $E(\Fq(u^2))\oplus W_d$ is
an orthogonal direct sum and $\disc(W_d)=p^{fd/2}$ by 
Prop.~\ref{prop:heights}.  Thus 
$$\frac{R(E/\Fq(u))}{R(E/\Fq(u^2))}=\frac{p^{fd/2}}{I^2}.$$

For the $\tau$ ratio, we have 
$\tau(E/\Fq(u))=2^{d+2}d^2q^{1-d/2}$
and $\tau(E/\Fq(u^2))=2^{d/2}d^2q^{1-d/4}$, so
$$\frac{\tau(E/\Fq(u))}{\tau(E/\Fq(u^2))}=2^{d/2+2}q^{-d/4}=2^{p^f+1}q^{-d/4}.$$

Substituting the last two displays into the ratio of BSD formulas
yields part (1).  

We saw in Theorem~\ref{thm:RicardoRank} that $I$ is a power of $2$
times a power of $p$, so part (1)  shows that the same is true of 
$|\sha(E/\Fq(u))|/|\sha(E/\Fq(u^2))|$.

Next we consider the pull-back and norm maps on the Tate-Shafarevich
groups induced by the inclusion of fields $\Fq(u^2)\into\Fq(u)$:
$$\sha(E/\Fq(u^2))\labeledto{j}\sha(E/\Fq(u))\labeledto{\NN}\sha(E/\Fq(u^2)).$$
The composition $\NN\compose j$ is multiplication by $2$, so is an
isomorphism on the prime-to-2 part of $\sha(E/\Fq(u^2))$.  Therefore,
$\NN$ is surjective on the prime-to-2 parts.  This proves
part (2), namely that $\coker\NN$ is a 2-group.  Bounds on its
order will be proven below.

Since the prime-to-$2p$ parts of $\sha(E/\Fq(u))$ and
$\sha(E/\Fq(u^2))$ have the same order, they are isomorphic via $\NN$.
We will prove that the kernel of $\NN$ has odd order and
that the cokernel is non-trivial if $(q-1)/d$ is odd.  For these
assertions, we consider the diagram of descent sequences:
$$\xymatrix{
0\ar[r]&E(\Fq(u))/2E(\Fq(u))\ar[r]\ar[d]&\Sel_2(E(\Fq(u)))\ar[r]\ar[d]&
\sha(E/\Fq(u))_2\ar[r]\ar[d]&0\\
0\ar[r]&E(\Fq(u^2))/2E(\Fq(u^2))\ar[r]&\Sel_2(E(\Fq(u^2)))\ar[r]&
\sha(E/\Fq(u^2))_2\ar[r]&0}$$
where the vertical maps are norms.  

Suppose that $(q-1)/d$ is even.  The Selmer group calculations
recalled above show that if $r$ is the rank of $E(\Fq(u^2))$, then the
diagram above is isomorphic to
$$\xymatrix{
  0\ar[r]&\F_2^{d/2+r+2}\ar[r]\ar[d]&\F_2^d\ar[r]\ar[d]&
  \sha(E/\Fq(u))_2\ar[r]\ar[d]&0\\
  0\ar[r]&\F_2^{r+2}\ar[r]&\F_2^{d/2}\ar[r]&
  \sha(E/\Fq(u^2))_2\ar[r]&0}$$ and the middle vertical map is
surjective.  It follows that the right vertical map is also
surjective.  On the other hand, considering the rows shows that the
right hand groups have the same order, so they are isomorphic via the
vertical map, in other words
$$\NN:\sha(E/\Fq(u))_2\to \sha(E/\Fq(u^2))_2$$  
is an isomorphism.  A chase using the diagram
$$\xymatrix{
0\ar[r]&\sha(E/\Fq(u))_2\ar[r]\ar[d]&\sha(E/\Fq(u))_{2^n}\ar^2[r]\ar[d]
&\sha(E/\Fq(u))_{2^{n-1}}\ar[d]\\
0\ar[r]&\sha(E/\Fq(u^2))_2\ar[r]&\sha(E/\Fq(u^2))_{2^n}\ar^2[r]
&\sha(E/\Fq(u^2))_{2^{n-1}}}$$
and induction on $n$ shows that
$$\NN:\sha(E/\Fq(u))_{2^\infty}\to \sha(E/\Fq(u^2))_{2^\infty}$$  
is injective.

Matters are slightly more complicated when we assume that $(q-1)/d$ is
odd.  In this case, the Selmer group calculations recalled above show
that that the descent diagram is isomorphic to
$$\xymatrix{
  0\ar[r]&\F_2^{d/2+r+2}\ar[r]\ar[d]&\F_2^{d-2}\ar[r]\ar[d]&
  \sha(E/\Fq(u))_2\ar[r]\ar[d]&0\\
  0\ar[r]&\F_2^{r+2}\ar[r]&\F_2^{d/2}\ar[r]&
  \sha(E/\Fq(u^2))_2\ar[r]&0}$$ 
and the middle vertical map has cokernel of order 4.  This shows that
the cokernel of the right vertical map has order at most 4.  On the
other hand, considering the rows shows that the orders of the right
hand groups have ratio 4, and we conclude that the right hand vertical
map is injective.  The same argument as in the case $(q-1)/d$ is even
then shows that
$$\NN:\sha(E/\Fq(u))_{2^\infty}\to \sha(E/\Fq(u^2))_{2^\infty}$$  
is injective.  If it were surjective, it would be an isomorphism, and
therefore induce an isomorphism on the 2-torsion subgroups, but we just
saw this is not the case, so the cokernel is non-trivial.  

Summarizing the last three paragraphs, we have that the 2 part of
$\ker\NN$ is always trivial, and the 2 part of $\coker\NN$ is
non-trivial when $(q-1)/d$ is odd.  This completes the proof that
$\Ker\NN$ is a $p$-group, i.e., part (3).

We saw in Theorem~\ref{thm:index-bounds} that for fixed $p$ and $f$,
$I$ is independent of $q$.  Since the kernel and cokernel of $\NN$ are
$p$-groups and $2$-groups respectively, it follows from parts (1),
(2), and (3) that the cokernel of $\NN$ is independent of $q$, and the
order of the kernel goes to infinity with $q$.  Finally, if
$p^f\equiv1\pmod 4$ and we take $q=p^{2f}$, then $(q-1)/d=(p^f+1)/2$
is odd and the cokernel of $\NN$ is non-trivial.  This completes the
proof of part (4) and the proof of the theorem.
\end{proof}

\section{Correspondences and points}\label{s:Berger}
It is proven in \cite[\S11]{Ulmer14a} that the N\'eron model
$\EE_d\to\P^1$ of $E/\Fq(u)$ is birational to the quotient of a
product of curves.  This implies the BSD conjecture for $E/\Fq(u)$ and
it allows us to compute the rank of $E(\Fq(u))$ in terms of Jacobians
over finite fields.  It also allows us to ``explain'' the explicit
points of this paper and \cite{Ulmer14a} via certain correspondences.
In this section we briefly explain how this plays out for $d=p^f+1$
and $d=2(p^f-1)$.

Let $\EE_d$ be the N\'eron model of $E/\Fq(u)$, so the function field of
$\EE_d$ is generated by $x$, $y$, $u$ with relation
$y^2=x(x+1)(x+u^d)$.

Let $\CC_d$ be the smooth projective curve with affine model
$$z^d+x^2+1=0$$  and let $\DD_d$ be defined by 
$$w^d+y^2+1=0.$$  
There is an action of $\mu_d\times\mu_2$ on $\CC_d\times\DD_d$ given
by
$$\zeta_d(x,y,z,w)=(x,y,\zeta_dz,\zeta_d^{-1}w)\qquad
(-1)(x,y,z,w)=(-x,-y,z,w).$$

The quotient has function field generated by $x^2$, $xy$, and $zw$.
It is birational to $\EE_d$ with the quotient map
$\phi:\CC_d\times\DD_d\to\EE_d$ given by
$$\phi^*(x,y,u)=(-x^2-1,xy(x^2+1),zw).$$

The rational quotient map $\CC_d\times\DD_d\ratto\EE_d$ can be used to
compute the zeta-function of $\EE_d$ and the $L$-function of
$E/\Fq(u)$.  The fact that $\EE_d$ is dominated by a product of
quotients of Fermat curves ``explains'' why the $L$-function of $E$ is
expressible in terms of the Jacobi sums
$J(\lambda,\chi_i)=J(\chi_{d/2},\chi_i)$.

Let $J=J_{\CC_d}$ be the Jacobian of $\CC_d$.  The results of
\cite{Ulmer13a} use the rational map $\phi$ to prove that the rank of
$E(\Fpbar(t^{1/d})$ is equal to the rank of the group of endomorphisms
of $J$ which anti-commute with the action of $\mu_d$.  This group can
also be identified with a certain group of correspondences on
$\CC_d\times\DD_d$.

The geometry of this set-up can be used to ``explain'' the explicit
points of \cite{Ulmer14a} and this paper.  Namely, when $d=p^f+1$,
consider the graph of the $p^f$-power Frobenius
$\Fr_{p^f}:\CC_d\to\DD_d$.  This anti-commutes with the $\mu_d$
action.  Pushing it down via $\phi$ yields the section corresponding to
the point $P_0$ of \cite[Thm.~8.1]{Ulmer13a}.

When $d$ is even, we have an automorphism $\psi:\CC_d\to\CC_d$ given
by 
$$\psi^*(x,z)=(x/z^{d/2},z^{-1}).$$
When $d=2(p^f-1)$, the composition of $\psi$ and the $p^f$-power
Frobenius gives a morphism $\CC_d\to\DD_d$ which induces a
homomorphism $J_{\CC_d}\to J_{\DD_d}$ which anti-commutes with the
$\mu_d$ actions.  Pushing the graph of this morphism down to $\EE_d$
via $\phi$ leads to the section associated to the point $R_0$ of
Section~\ref{s:points}.

Note that Theorem~\ref{thm:rank} shows that $E$ has points of infinite
order over many extensions $\Fq(t^{1/d})$, but we lack any explicit
expression for their coordinates except when $d$ divides $p^f+1$ or
$2(p^f-1)$.  It would be very interesting to find a direct connection
between the balanced condition and special endomorphisms or
correspondences on $\CC_d\times\DD_d$ which would provide explicit
expressions for points of infinite order on $E$.

\section{The case $p=2$}\label{s:p=2}
The equation for the Legendre curve does not define an elliptic curve
in characteristic 2.  However, there is a curve $E'$ defined uniformly
for all $p$ which is isogenous to the Legendre curve for all $p>2$.
Most of the main results of this paper extend to $E'$ and continue to
hold in characteristic 2.

More precisely, let $t'$ and $u'$ be new indeterminates with
$u^{\prime d}=t'$, and let $E'$
be the elliptic curve
\begin{equation}\label{eq:E'}
y^{\prime2}+x'y'+t'y'=x^{\prime 3}+t'x^{\prime 2}
\end{equation}
over $\Fp(t')$.  It is proven in \cite[Lemma 11.1]{Ulmer14a} that if
$p>2$ and we identify $\Fp(t')$ and $\Fp(t)$ by sending $t'$ to
$t/16$, then $E$ and $E'$ are 2-isogenous.  If $16$ is a $d$-th power
in $\Fq$, then we may identify $\Fq(u)$ and $\Fq(u')$ (as extensions
of $\Fp(t)=\Fp(t')$) and in this case $E$ and $E'$ have the same
$L$-function over $\Fq(u)$ and the same rank because they are
isogenous.  We would like to make a more precise statement that holds
over $\Fq(u')$ for all $q$.

To that end, we define a new Jacobi sum $J'_o$ where $o\subset\Z/d\Z$ is
an orbit for multiplication by $q$.  As before, we assume that
$o\neq\{0\}$ and $o\neq\{d/2\}$ if $d$ is even.  We keep the notations of
Subsection~\ref{ss:Jacobi} and define
$$J'_o=J(\chi_i,\chi_i)$$
where $i$ is any element of $o$.  This is well defined because
$J(\chi_{qi},\chi_{qi})=J(\chi_i,\chi_i)$.   

The following theorem extends many of our results about $E$ to $E'$,
where they continue to hold when $p=2$.

\begin{thm}
Let $p$ be an arbitrary prime number, $q$ a power of $p$, and $d$ an
integer prime to $p$.  Let $K'=\Fq(u')$ where $u^{\prime d}=t'$, and
let $E'$ be the elliptic curve over $K'$ defined by \eqref{eq:E'}.
\begin{enumerate}
\item The Hasse-Weil $L$-function of $E'$ over $K'$ is
$$L(E'/K',T)=\prod_{o\in O}
\left(1-J^{\prime2}_oT^{|o|}\right).$$ 
Here the product is over the set $O$ of orbits $o\subset\Z/d\Z$ for
multiplication by $q$, omitting the orbits $\{0\}$ and $\{d/2\}$
\textup{(}if $d$ is even\textup{)}.
\item Let $o\in O$ be an orbit and let $e=d/\gcd(d,i)$ for any $i\in
  o$. Then $J^{\prime 2}_o/q^{|o|}$ is a root of unity if and only if
  $p$ is balanced modulo $e$.  
\item If $p>2$ and $i\in o$, then $J'_o=\chi^i(4)\lambda(-1)J_o$.  
\item Suppose $p=2$ and $o\in O$.  If $e=d/\gcd(d,i)$ for
  any $i\in o$ and $2$ is balanced modulo $e$, then $J^{\prime
    2}_o=q^{|o|}$.  If $p=2$, then the rank of $E'(\Fq(u'))$ is
$$\sum_{\substack{e|d\\e>2}}\begin{cases}
\frac{\phi(e)}{o_e(q)}&\text{if $2$ is balanced modulo $e$}\\
0&\text{otherwise}\end{cases}$$
where $\phi$ is Euler's function and $o_e(q)$ is the order of $q$ in
$(\Z/e\Z)^\times$. 
\item Let $\EE'_d\to\P^1$ be the N\'eron model of $E'/\Fq(u')$.  Let
  $\CC'_d=\DD'_d$ be the smooth, projective \textup{(}Fermat\textup{)}
  curve over $\Fq$ defined by $z^d=x(1-x)$.  Then $\EE_d$ is birational
  to the quotient of $\CC_d\times\DD_d$  by the anti-diagonal action
  of $\mu_d$.
\end{enumerate}
\end{thm}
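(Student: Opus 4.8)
The plan is to treat the five parts in the order (5), (1), (3), (2), (4): the geometric model of (5) governs the $L$-function, and the Jacobi-sum identities and the rank formula then follow. I would keep the characteristic arbitrary throughout and split off $p=2$ only where the Legendre model and the quadratic character are unavailable. For part (5) I would mirror Section~\ref{s:Berger} and exhibit the birational map directly. Writing coordinates $(x_1,z_1)$ on $\CC'_d$ and $(x_2,z_2)$ on $\DD'_d$ with $z_j^d=x_j(1-x_j)$, the anti-diagonal $\mu_d$-action fixes $x_1,x_2$ and the product $w=z_1z_2$, and one checks that the invariant field is $\Fq(x_1,x_2,w)$ with $w^d=x_1(1-x_1)x_2(1-x_2)$. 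Setting $u'=w$, the generic fibre over $\Fq(u')$ is the genus-one curve $x_1(1-x_1)x_2(1-x_2)=u^{\prime d}$, and I would give an explicit change of coordinates putting it into the Weierstrass form \eqref{eq:E'}; verifying \eqref{eq:E'} is then a substitution valid in every characteristic. That the quotient is birational to $\EE'_d$ and not merely to its generic fibre follows by comparing fibral behaviour exactly as in \cite{Ulmer14a}.

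For part (1) I would run the point count of Theorem~\ref{thm:L} essentially verbatim, with $E'$ in place of $E$. Expanding $\log L(E'/K',T)$, grouping the points $\beta\in\P^1(\F_{q^n})$ by $\beta\mapsto\beta^d$, and writing the Frobenius traces $a'_{\alpha,q^n}$ of $E'$ as a character sum, the same interchange of sums, substitution $\alpha\mapsto\gamma\alpha$, and Hasse--Davenport relation produce $J(\chi_i,\chi_i)^2=J^{\prime2}_o$ where Theorem~\ref{thm:L} produced $J(\lambda,\chi_i)^2$. The decisive difference is that for $E'$ both character factors are the same $\chi_i$, rather than $\lambda$ and $\chi_i$; this is forced by the model $z^d=x(1-x)$ of part (5) on both $\CC'_d$ and $\DD'_d$, and is exactly what survives into characteristic $2$, where $\lambda$ does not exist. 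Equivalently, once (5) is available, (1) can be read off from the zeta function of $\EE'_d$ via K\"unneth and the $\mu_d$-isotypic decomposition of $H^1(\CC'_d)$, whose Frobenius eigenvalues are the Jacobi sums $J(\chi_i,\chi_i)$. This step is where I expect the main obstacle to lie, for the reason explained below.

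Parts (3) and (2) are then formal. For (3) with $p>2$ I would substitute $J(\chi_i,\chi_i)=g(\chi_i)^2/g(\chi_i^2)$ and $J(\lambda,\chi_i)=g(\lambda)g(\chi_i)/g(\lambda\chi_i)$ and apply the Hasse--Davenport product relation among $g(\chi_i)$, $g(\lambda\chi_i)$, $g(\chi_i^2)$, $g(\lambda)$; the constant that emerges is $\chi^i(4)\lambda(-1)$. For (2), when $p>2$ part (3) gives $J^{\prime2}_o=\chi^{2i}(4)J_o^2$ with $\chi^{2i}(4)$ a root of unity, so the claim reduces to Proposition~\ref{prop:ExpJacobi}; when $p=2$ I would rerun the Stickelberger computation of that proposition for $J(\chi_i,\chi_i)$, where the three fractional parts $\langle ai'p^j/e\rangle+\langle ai'p^j/e\rangle+\langle -2ai'p^j/e\rangle$ contribute $1$ or $2$ according as the residue of $ai'p^j$ lies in $(0,e/2)$ or $(e/2,e)$, recovering the same balanced condition.

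For part (4), the equality $J^{\prime2}_o=q^{|o|}$ follows from the analogue of the last paragraph of Proposition~\ref{prop:ExpJacobi}: balancedness forces the order of $2$ modulo $e$ to be even (Section~\ref{s:balanced}), so the exponent $\nu$ defined by $q^{|o|}=2^\nu$ is even, $q^{|o|/2}\in\Q$, and the root of unity $J'_o/q^{|o|/2}\in\Q(\mu_e)$ is pinned to $\pm1$ by a congruence argument. The rank formula then combines (1) and (2) as in Corollary~\ref{cor:ord}, together with the equality $\ord_{s=1}L(E'/K',s)=\rk E'(K')$ in characteristic $2$, which I would deduce from (5) and the Tate conjecture for the dominating product of Fermat curves, as in \cite[Cor.~11.3]{Ulmer14a}. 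The hardest point throughout is characteristic $2$ in parts (1) and (4): the clean count of Theorem~\ref{thm:L} depends on the quadratic character $\lambda$, which does not exist when $q$ is a power of $2$, so one must genuinely use the $E'$-model (or the Fermat cohomology of (5)) to see the two factors collapse to $J(\chi_i,\chi_i)$; and pinning $J^{\prime2}_o/q^{|o|}$ to $1$ rather than to an arbitrary root of unity, without the reduction $\lambda\in\{\pm1\}\pmod 2$ available for $E$, is the subtlest ingredient.
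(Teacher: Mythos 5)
Your proposal is correct, and it is essentially the proof the paper gives: the paper establishes (5) by the same invariant-function-field computation (writing out the explicit assignments $u'\mapsto zw$, $x'\mapsto -(zw)^d/y$, $y'\mapsto (zw)^d x(1-y)/y$, which is the coordinate change you defer); for (1) it offers exactly your two routes, namely a direct count ``along the lines of the proof of Theorem~\ref{thm:L}'' or the zeta function of $(\CC'_d\times\DD'_d)/\mu_d$ via (5); it proves (2) by the same Stickelberger valuation formula with the three fractional parts you display; and it proves the first claim of (4) by Stickelberger's congruence at primes over $2$, which is your ``congruence argument,'' your observation that balancedness makes $\nu$ even being used there silently. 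The local differences are worth recording. The paper does not prove (3) at all but cites \cite[Exer.~40, Chap.~2]{CohenNT1}; your Gauss-sum/Hasse--Davenport derivation is the standard argument for that identity, and it buys you the shortcut of deducing (2) for $p>2$ from (3) together with Proposition~\ref{prop:ExpJacobi}, whereas the paper runs Stickelberger uniformly in $p$ (marginally cleaner, since it keeps (2) independent of the citation). When you pin down the constant in (3), beware that the paper's Jacobi sums are normalized by $u+v+1=0$ rather than the standard $u+v=1$, a discrepancy of $\chi_1\chi_2(-1)$; nothing downstream uses more than the fact that the ratio is a root of unity, but the exact constant is convention-sensitive. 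Finally, for the rank formula in (4) the paper says only that it ``follows from parts (1) and (2),'' leaving implicit the input $\ord_{s=1}L(E'/K',s)=\rk E'(K')$; your explicit appeal to (5) and the Tate conjecture for a surface dominated by a product of curves, as in \cite[Cor.~11.3]{Ulmer14a}, is precisely the justification a complete proof needs in characteristic $2$, so keep it.
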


\begin{proof}
The proofs are for the most part parallel to those of the
corresponding statements for $E$, so we will omit many details.

For (1), one may give a direct, elementary argument along the lines of
the proof of Theorem~\ref{thm:L}.  Alternatively, one may use
part (5) of this theorem and a geometric analysis of the morphism
$(\CC'_d\times\DD'_d)/\mu_d\ratto\EE'_d$.  Indeed, the inverse roots of the zeta
function of $\CC'_d$ are exactly the Jacobi sums $J'_o$ and the
inverse roots of the $H^2$ part of the zeta function of
$(\CC'_d\times\DD'_d)/\mu_d$ are the squares $J^{\prime 2}_o$.  In some
sense this ``explains'' their appearance in the $L$-function of $E'$.

The proof of (2) is parallel to the first part of the proof of
Proposition~\ref{prop:ExpJacobi}.  Indeed, Stickelberger's theorem shows
that the valuation of  $J'_o$ at the prime $\sigma_a(\p)$ is given by
$$-\nu+\sum_{j=0}^{\nu-1}\left\langle\frac{ai'p^j}{e}\right\rangle
+\left\langle\frac{ai'p^j}{e}\right\rangle
+\left\langle\frac{-2ai'p^j}{e}\right\rangle$$ 
where $i'=i/\gcd(d,i)$, $\nu$ satisfies $q^{|o|}=p^\nu$, and the
valuation of $p$ is 1.  It is then easy to see that the valuations are
all $|o|/2$ if and only if $p$ is balanced modulo $e$.

Part (3) is \cite[Exer.~40 in Chap.~2]{CohenNT1}.  

For part (4), if $p=2$ and $2$ is balanced modulo $e$, then by part
(2), $J^{\prime 2}_o/q^{|o|}$ is a root of unity.  To see that it is
1, we may use Stickelberger's congruence \cite[Thm.~3.6.6]{CohenNT1}
to see that $J'_o/q^{|o|/2}\equiv 1$ modulo any prime of $\Q(\mu_e)$
over $2$, so that $J'_o/q^{|o|/2}=\pm1$.  The rest of part (4) then
follows from parts (1) and (2).

For (5), let $\CC'_d$ and $\DD'_d$ be given by $z^d=x(1-x)$ and
$w^d=y(1-y)$, so that the function field of the quotient
$(\CC'_d\times\DD'_d)/\mu_d$ is generated by $x$, $y$, and $zw$.  The
function field of $\EE'_d$ is generated by $u'$, $x'$ and $y'$ with relation
$y^{\prime2}+x'y'+t'y'=x^{\prime 3}+u^{\prime d}x^{\prime 2}$.   It is
easy to see that the assignments $u'\mapsto
zw$, $x'\mapsto -(zw)^d/y$, and $y'\mapsto(zw)^dx(1-y)/y$ yield a
well-defined isomorphism between the two function fields.  (This is
essentially the example discussed in \cite[\S7]{Ulmer13a}.)
\end{proof}

We note that part (3) is an arithmetic reflection of the fact that
the Fermat curve $\CC'_d$ in part (5) is a twisted form of the Fermat
curve $\CC_d$ of the previous section.  It follows from part (3) that
$J^{\prime 2}_o=\chi^i(16)J^2_o$.  This relation between the inverse
roots of the $L$-functions of $E$ and $E'$ is an arithmetic
reflection of the identification $\Fq(u)=\Fq(u')$ and the resulting
isogeny between $E$ and $E'$ when $16$ is a $d$-th power in
$\Fq^\times$.

We also note that if $p>2$, $d>2$ divides $p^f+1$, and $q\equiv1\pmod
d$, then $16$ (and indeed any integer) is a $d$-th power in $\Fq$.
Thus the rank result of \cite[Thm.~7.5]{Ulmer13a} can be recovered
from the theorem.

\bibliography{database}{}
\bibliographystyle{alpha}

\end{document}